\newcommand{\R}{\mathbb{R}}
\newcommand{\Kbb}{\mathbb{K}}
\newcommand{\Bbold}{\mathbb{B}}
\newcommand{\Ebb}{\mathbb{E}}
\newcommand{\Ical}{\mathcal{I}}
\newcommand{\Jcal}{\mathcal{J}}
\newcommand{\Acal}{\mathcal{A}}
\newcommand{\Bcal}{\mathcal{B}}
\newcommand{\Dcal}{\mathcal{D}}
\newcommand{\Lcal}{\mathcal{L}}
\newcommand{\Tcal}{\mathcal{T}}
\newcommand{\Ecal}{\mathcal{E}}
\DeclareMathOperator*{\argmin}{arg\,min}
\newcommand{\scalar}[2]{\left \langle #1 , #2 \right \rangle}
\newcommand{\frob}[2]{\left \langle #1 , #2 \right \rangle_F}
\DeclareMathOperator{\minimize}{minimize}
\DeclareMathOperator{\rank}{rank}
\DeclareMathOperator{\spa}{span}
\setlist[enumerate]{leftmargin=.5in}
\setlist[itemize]{leftmargin=.5in}
\crefname{hypothesis}{Hypothesis}{Hypotheses}
\title{Bilevel Imaging Learning Problems as Mathematical Programs with\\ Complementarity Constraints: Reformulation and Theory
}
\author{Juan Carlos De los Reyes\thanks{Research Center for Mathematical Modeling (MODEMAT), Escuela Polit\'ecnica Nacional, Quito, Ecuador
  (\email{juan.delosreyes@epn.edu.ec}, \url{http://www.modemat.epn.edu.ec/}).}
}
\DeclareMathOperator{\diag}{diag}
\DeclareMathOperator{\Cos}{Cos}
\DeclareMathOperator{\Sin}{Sin}
\DeclareMathOperator{\Tan}{Tan}
\begin{document}
\maketitle

\begin{abstract}
    We investigate a family of bilevel imaging learning problems where the lower-level instance corresponds to a convex variational model involving first- and second-order nonsmooth sparsity-based regularizers. By using geometric properties of the primal-dual reformulation of the lower-level problem and introducing suitable auxiliar variables, we are able to reformulate the original bilevel problems as \emph{Mathematical Programs with Complementarity Constraints (MPCC)}. For the latter, we prove tight constraint qualification conditions (MPCC-RCPLD and partial MPCC-LICQ) and derive Mordukhovich (M-) and Strong (S-) stationarity conditions. The stationarity systems for the MPCC turn also into stationarity conditions for the original formulation. Second-order sufficient optimality conditions are derived as well, together with a local uniqueness result for stationary points. The proposed reformulation may be extended to problems in function spaces, leading to MPCC's with constraints on the gradient of the state. The MPCC reformulation also leads to the efficient use of available large-scale nonlinear programming solvers, as shown in a companion paper, where different imaging applications are studied.
  \end{abstract}

\begin{keywords}
  Bilevel optimization, variational models, machine learning, mathematical programs with complementarity constraints.
\end{keywords}

\begin{AMS}
  49K99, 90C33, 68U10, 68T99, 65K10
\end{AMS}

\section{Introduction}

Bilevel imaging learning problems were introduced in \cite{tappen2007utilizing} for learning Markov random fields models, and in \cite{de2013image,kunisch2013bilevel} for optimally learning noise models and nonsmooth sparsity-based regularizers in variational denoising problems. Thereafter, several other imaging applications have been successfully considered within this framework, e.g., mixed noise models \cite{calatroni2013dynamic,calatroni2019analysis}, higher-order regularizers \cite{reyes2015a,davoli2018one,davoli2019adaptive,hintermuller2017optimal}, blind deconvolution problems \cite{hintermuller2015bilevel}, nonlocal models \cite{d2021bilevel,bartels2020parameter}.

The main difficulty of these problems relies on the nonsmooth structure of the lower-level instances, which prevented the application of standard bilevel programming theory.
In classical bilevel optimization, whenever the lower-level cost function is convex and differentiable and no additional inequality constraints are present, the problems may equivalently be written as \emph{Mathematical Programs with Complementarity Constraints (MPCC)}, which allows the use of the rich analytical toolbox developed for MPCC in the last decades to derive sharp optimality conditions \cite{luo1996mathematical,flegel2005guignard,outrata2000generalized}. Since in most of the cases, however, this equivalence does not hold \cite{dempe2002foundations}, much of the research carried out in this field consists precisely in proving under which circumstances an MPCC formulation is possible.


In the case of bilevel imaging learning problems with total variation, for instance, due to the difficulties already mentioned, optimality conditions have been previously obtained using a local regularization of the nonsmooth terms and performing an asymptotic analysis thereafter \cite{de2013image,van2017learning}, yielding a \emph{C-stationarity} system (see also \cite{hintermuller2017analytical,hintermuller2019generating} for a related approach based on a dual reformulation of the lower-level problem). Alternatively, a direct nonsmooth approach was considered in \cite{hintermuller2015bilevel} and \cite{dlrvillacis2021} to learn point spread functions in blind deconvolution models and the weight in front of the fidelity term in denoising models, respectively. In both such cases, the parameter affects the fidelity term and, based on variational analysis tools, \emph{M-stationarity} systems were  derived. Recently \cite{delosreyesvillacis}, M-stationarity conditions were also obtained for total variation bilevel learning problems, when the scale-dependent parameter appears within the regularizer. In summary, so far, M-stationarity systems are the sharpest ones that have been obtained for total variation bilevel problems.

In this paper we extend and improve previous results, providing sharper optimality conditions for bilevel imaging learning problems by means of an MPCC reformulation. By restricting our attention to first- and second-order sparsity-based convex regularizers, and exploiting the geometric nature of the primal-dual reformulation of the lower-level problem, we are indeed able, after introducing suitable auxiliar variables, to reformulate the bilevel instances as MPCC. This reformulation opens the door to a detailed characterization of stationarity conditions; we are able to demonstrate \emph{M-} and \emph{S-stationarity} (\Cref{thm: strong stationarity MPCC} and \Cref{thm: strongg stationarity general problem}) under suitable assumptions. Moreover, also second-order sufficient optimality conditions may be derived in this manner (\Cref{thm: SSC}) and an infinite-dimensional bilevel counterpart may be stated as well (\Cref{sec:infinite}).

The verification that optimal parameters for variational imaging models are M- or S-stationary points leads to very important consequences, both theoretical and numerical. Theoretically, relevant MPCC properties, such as minimality, local uniqueness or stability, can only be obtained for this type of points through first- and second-order conditions. Numerically, the characterization of M- and S-stationary points enables the use of efficient algorithms, with local superlinear convergence rates, to compute them \cite{luo1996mathematical}. The results obtained in this article are, therefore, of importance to conclude that optimal parameters for variational models are robust with respect to changes in the data (e.g., noisy image, training set) and that their computation can be carried out efficiently with existing computational software to solve large-scale MPCC.


The outline of the paper is as follows. We will present the general bilevel problem and its MPCC reformulation in \Cref{sec:problem_formulation}, illustrating different particular regularizers such as \emph{Total Variation} and \emph{Second-Order Total Generalized Variation}. The detailed analysis of the bilevel problems with total variation will be presented in \Cref{sec:stationarity_conditions}, where MPCC-RCPLD and partial MPCC-LICQ will be verified, and corresponding M- and S-stationarity systems derived. An extension of the obtained reformulation to the infinite-dimensional setting will be briefly explored in \Cref{sec:infinite}, outlining its main difficulties. In \Cref{sec:second}, second-order sufficient optimality conditions are studied and the local uniqueness of minima is theoretically verified. The general bilevel learning problem with first- and second-order nonsmoth sparsity-based convex regularizers is analyzed in \Cref{sec:general}, where MPCC-RCPLD is verified and a corresponding M-stationarity system derived. Finally, in \Cref{sec:conclusions} we draw some conclusions and outline some extensions of the obtained results to other application fields.

\section{Problem Statement and Reformulation}\label{sec:problem_formulation}
In this work we are concerned with bilevel learning problems of the following form:
\begin{mini!}
    {\substack{(u,\lambda,\sigma,\alpha,\beta)}}{\Jcal (u)}{\label{eq:bilevel_problem}}{}
    \addConstraint{u = \argmin_{v\in\R^d} \, \Ecal(v,\lambda,\sigma, \alpha, \beta)}{}{}
    \addConstraint{P(\lambda),R(\sigma),Q(\alpha), S(\beta) \geq 0,}{}{}
\end{mini!}
where the lower-level energy is given by
\begin{multline}\label{eq:patch_abstract_lower_level}
    \Ecal(v,\lambda,\sigma, \alpha, \beta) := \Dcal_0(v) + \sum_{j=1}^K \sum_{i=1}^{k_j} P_j(\lambda_j)_i \Dcal_j(v)_{i} + \sum_{j=1}^L \sum_{i=1}^{l_j} R_j(\sigma_j)_i |(\mathbb B_j v)_i |\\ +
    \sum_{j=1}^M \sum_{i=1}^{m_j} Q_j(\alpha_j)_i \|(\mathbb K_j v)_i\|+
    \sum_{j=1}^N \sum_{i=1}^{n_j} S_j(\beta_j)_i \|(\mathbb E_j v)_i\|_F,
\end{multline}
with real vector parameters $\lambda_j ,\sigma_j , \alpha_j, \beta_j$. The operators $P_j:\R^{|\lambda_j|} \mapsto \R^{k_j}$, $R_j: \R^{|\sigma_j|} \mapsto \R^{l_j}$, $Q_j:\R^{|\alpha_j|} \mapsto \R^{m_j}$ and $S_j: \R^{|\beta_j|} \mapsto \R^{n_j}$
are assumed to be twice continuously differentiable, while the operators inside the norms, $\mathbb B_j: \R^d \mapsto \R^{l_j}$, $\mathbb K_j: \R^d \mapsto \R^{m_j \times 2}$ and $\mathbb E_j: \R^d \mapsto (\textrm{Sym}_2)^{n_j}$, are assumed to be linear, where $\textrm{Sym}_2$ denotes the space of $2 \times 2$ symmetric matrices. The functions $\Dcal_0:\R^d \to\R$ and $\Dcal_j:\R^d \to\R^{k_j},~j=1, \dots, K$, correspond to different \textit{data fidelity terms} considered in the model. The adjoint of a linear operator $T$ will be denoted by $T^\star$. The notation $|\cdot|, ~\|\cdot \|$ and
$\|\cdot\|_F$ stands for the absolute value, the Euclidean norm and the Frobenius norm, respectively. Similarly, $\scalar{\cdot}{\cdot}$
and $\frob{\cdot}{\cdot}$ denote the Euclidean and Frobenius scalar products, respectively. For a given matrix $A \in \R^{n \times m}$ and an index set $I \subset \{1,\dots,n\}$, we denote by $A_I \in \R^{|I| \times m}$ the submatrix formed by the rows of $A$ indexed by $I$.

The energy \eqref{eq:patch_abstract_lower_level} considered in this manuscript encompasses, therefore, the main first and second order sparsity-based convex regularizers used in variational image processing. Moreover, the regularizing operators may involve local filter kernels as well.

Assuming existence of a training set of clean and noisy images or some information about noise statistics, the upper-level loss function $\Jcal$ incorporates this information. This occurs, for instance, through a quadratic loss using ground-truth images \cite{de2013image}, quality measures aimed at preserving jumps \cite{reyes2015a} or deviation of image residuals from an estimated variance corridor \cite{hintermuller2017optimal}.


Next, we will reformulate problem \cref{eq:bilevel_problem} using duality properties of the corresponding lower-level problem. To do so, we assume along the paper the following condition on the different data fidelity terms.

\begin{assumption} \label{assump:D_convex_diff}
  The data fidelity functions $\Dcal_j$, $j = 0, \dots, K$, are convex in each component and twice continuously differentiable. Moreover, the data fidelity function $\Dcal_0$ is strongly convex.
\end{assumption}

We start by studying two particular instances of \eqref{eq:bilevel_problem} before introducing the general problem.

\subsection{Bilevel Total Variation}
A typical lower-level problem is the classical Rudin-Osher-Fatemi denoising model given by the energy minimizer:
\begin{equation} \label{eq: TV lower level}
  u= \argmin_{v \in \R^d} \quad \Dcal_0(v;f) + \sum_{i=1}^{m} Q(\alpha)_i\|(\mathbb K v)_i\|,
\end{equation}
where $\Kbb : \R^d \to \R^{m\times 2}$ stands for the discrete gradient operator and $f$ is the image corrupted with noise. In this case the choice of regularizer is known as \textit{isotropic total variation}, which has been widely adopted in the image processing community due to its edge-preserving properties
(see, e.g., \cite{scherzer2010handbook} and the references therein).

Given the assumptions on the fidelity terms, described in \cref{assump:D_convex_diff}, we can guarantee existence of a unique minimizer for problem \cref{eq: TV lower level}. Moreover, its necessary and sufficient optimality condition is given by the following variational inequality of the second kind \cite{de2015numerical}:
\begin{equation}\label{eq:lower_level_vi}
    \scalar{\nabla_u \Dcal_0(u;f)}{v-u} + \sum_{i=1}^{m} Q(\alpha)_i \|(\Kbb v)_i\| - \sum_{i=1}^{m} Q(\alpha)_i \|(\Kbb u)_i\| \ge 0, \quad \forall v \in \R^d.
\end{equation}
By using Fenchel duality theory \cite{ekeland1999convex}, it is possible to  write an equivalent primal-dual optimality condition for the lower level problem \cref{eq:lower_level_vi} as follows:
\begin{subequations}\label{eq:primal_dual_stationarity_lower_level}
    \begin{align}
        \nabla_u \Dcal_0(u;f) + \Kbb^\star q &= 0,\\
        \scalar{q_i}{(\Kbb u)_i}&= Q(\alpha)_i \|(\Kbb u)_i\|, && i=1,\dots,m,\\
        \|q_i\|&\le Q(\alpha)_i, && i=1,\dots,m,
    \end{align}
\end{subequations}
where $q$ stands for the dual multiplier and $\Kbb^\star q := K_x^\intercal q^{x} + K_y^\intercal q^{y}$ corresponds to the discrete divergence operator. Along the paper we assume that all row vectors of the matrices $K_x$ and $K_y$ are different from zero, and that $\rank (K_x)= \rank(K_y)=m$, with $m \leq d$.

Now, by replacing the lower-level problem with the optimality condition \cref{eq:primal_dual_stationarity_lower_level}, we get a new single level optimization problem given by:
\begin{mini!}
    {\substack{u,\lambda,\alpha}}{\Jcal(u)}{\label{eq:bilevel_problem_kkt_reformulation}}{}
    \addConstraint{\nabla_u \Dcal_0(u;f) + \Kbb^\star q}{= 0}
    \addConstraint{\scalar{q_i}{(\Kbb u)_i}}{= Q(\alpha)_i \|(\Kbb u)_i\|,}{\quad i=1,\dots,m\label{eq:primal_dual_problem_con_2}}
    \addConstraint{\|q_i\|}{\le Q(\alpha)_i,}{\quad i=1,\dots,m.\label{eq:primal_dual_problem_con_3}}
\end{mini!}
This alternative optimization problem involves the nonstandard complementarity constraint \cref{eq:primal_dual_problem_con_2}, which resembles a componentwise cosinus formula. To gain further insight, let us reformulate, using a trigonometric change of variables, the components of the dual variable and the gradient of the primal variable, $q_i$ and $(\Kbb u)_i$, as follows:
\begin{equation*}
  q_i = \delta_i [\cos(\phi_i),\sin(\phi_i)]^\intercal, \qquad
  (\Kbb u)_i = r_i[\cos(\theta_i),\sin(\theta_i)]^\intercal,
\end{equation*}
and let us also introduce the following inactive, active and biactive index sets:
\begin{align*}
    & \Ical(u):=\{i\in\{1,\dots,m\}: \; (\Kbb u)_i \neq 0\}, \\
    & \Acal(u):=\{i\in\{1,\dots,m\}: \; (\Kbb u)_i = 0, \|q_i\| < Q(\alpha)_i\}, \\
    & \Bcal(u):=\{i\in\{1,\dots,m\}: \; (\Kbb u)_i = 0, \|q_i\|=Q(\alpha)_i\}.
\end{align*}
In the index set $\Acal(u) \cup \Bcal(u)$, \Cref{eq:primal_dual_problem_con_2} holds trivially as $(\Kbb u)_i=0$ in this set. For the case $i\in\Ical(u)$ the equality holds only if both vectors $q_i$ and
$(\Kbb u)_i$ are colinear. Moreover, in this case the dual variable can be uniquely determined, which is not the case in the active and biactive sets, where it is contained in the unit ball (see \Cref{fig:vectors_by_index_set} for a sketch on this).
\begin{figure}[ht]
    \centering
    \begin{subfigure}{0.31\textwidth}
        \centering
        \begin{tikzpicture}
          \draw[step=0.5cm,gray,very thin] (-1.8,-1.8)grid(1.8,1.8);
          \draw(-2.0,0)-- (2.0,0);
          \draw(0,-2.0)-- (0,2.0);
          \draw(0,0)circle[radius=1.5cm];
          \draw[->,color=red,very thick](0,0)--(1.0606,1.0606) node[below] {$q_i$};
          \draw[->,color=blue,thick](0,0)--(1.5,1.5) node[left] {$(\Kbb u)_i$};
      \end{tikzpicture}      
        \caption{$i\in\Ical(u)$}
        \label{fig:sub-first}
    \end{subfigure}
    \begin{subfigure}{0.31\textwidth}
        \centering
        \begin{tikzpicture}
          \draw[step=0.5cm,gray,very thin] (-1.8,-1.8)grid(1.8,1.8);
          \draw(-2.0,0)-- (2.0,0);
          \draw(0,-2.0)-- (0,2.0);
          \draw(0,0)circle[radius=1.5cm];
          \fill[color=red,opacity=0.4] (0,0) circle (1.45cm);
          \fill[color=blue] (0,0) circle (2pt) node[below] {$(\Kbb u)_i$};
          \draw[->,color=red,thick](0,0)--(0.75,0.77) node[right] {$q_i$};
      \end{tikzpicture}
              \caption{$i\in\Acal(u)$}
        \label{fig:sub-second}
    \end{subfigure}
    \begin{subfigure}{0.31\textwidth}
        \centering
        \begin{tikzpicture}
          \draw[step=0.5cm,gray,very thin] (-1.8,-1.8)grid(1.8,1.8);
          \draw(-2.0,0)-- (2.0,0);
          \draw(0,-2.0)-- (0,2.0);
          \draw[very thick,color=red](0,0)circle[radius=1.5cm];
          \fill[color=blue] (0,0) circle (2pt) node[below] {$(\Kbb u)_i$};;
          \draw[->,color=red,thick](0,0)--(1.0606,1.0606) node[right] {$q_i$};
      \end{tikzpicture}
              \caption{$i\in\Bcal(u)$}
        \label{fig:sub-third}
    \end{subfigure}
    \caption{Geometric insight of the primal-dual system for the different index sets.}
    \label{fig:vectors_by_index_set}
\end{figure}
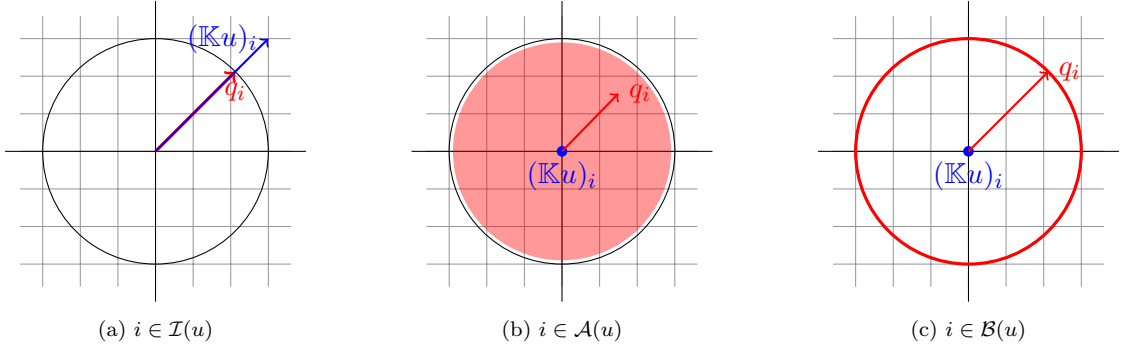

This insight allows us to conclude that the angles are the same, i.e., $\theta_i = \phi_i$, for all $i=1, \dots, m$, which enables the reformulation of the terms in \Cref{eq:primal_dual_problem_con_2} as follows:
\begin{align*}
    & \scalar{q_i}{(\Kbb u)_i} = r_i\cos(\theta_i)\delta_i\cos(\theta_i) + r_i\sin(\theta_i)\delta_i\sin(\theta_i)
    =r_i\delta_i(\cos^2(\theta_i)+\sin^2(\theta_i))=r_i\delta_i,\\
    & Q(\alpha)_i \|(\Kbb y)_i\| = Q(\alpha)_i (r_i^2\cos^2(\theta_i)+r_i^2\sin^2(\theta_i))^{1/2} = Q(\alpha)_i |r_i|,
\end{align*}
yielding
\begin{equation*}
    r_i\delta_i - Q(\alpha)_i |r_i| = 0,\qquad \forall i=1,\dots,m.
\end{equation*}
The inequality constraints \cref{eq:primal_dual_problem_con_3} can then be rewritten as
\begin{equation*}
    \|q_i\| = (\delta_i^2\cos^2(\phi_i)+\delta_i^2\sin^2(\phi_i))^{1/2} = |\delta_i| \le Q(\alpha)_i.
\end{equation*}
It is clear that whenever $r_i=\delta_i=0$, the angle $\theta_i$ may be arbitrarily chosen as it does not affect the representation of the primal and dual variables.
Furthermore, we know that $r_i \delta_i\ge 0$ and $\delta_i\ge 0$, implying $0\le Q(\alpha)_i r_i = r_i\delta_i$ and $0 \le \delta_i \le Q(\alpha)_i$.

Altogether, we can rewrite problem \cref{eq:bilevel_problem_kkt_reformulation} as the following MPCC:
\begin{mini}
    {\substack{u,q,\alpha,r,\delta,\theta}}{\Jcal (u)}{\label{eq:bilevel_mpec_formulation}}{}
    \addConstraint{\nabla_u \Dcal_0(u;f)+ \Kbb^\star q}{= 0}
    \addConstraint{(\Kbb u)_i}{= r_i [\cos(\theta_i),\sin(\theta_i)]^\intercal,}{\;\quad i=1,\dots,m}
    \addConstraint{q_i}{= \delta_i [\cos(\theta_i),\sin(\theta_i)]^\intercal,}{\;\quad i=1,\dots,m}
    \addConstraint{\delta_i}{\ge 0,}{\;\quad i=1,\dots,m}
    \addConstraint{0\le r_i}{\perp (Q(\alpha)_i -\delta_i) \ge 0,}{\;\quad i=1,\dots,m.}
\end{mini}
Problem \cref{eq:bilevel_mpec_formulation} has classical equality and inequality constraints, along with standard complementarity constraints (see, e.g., \cite{scheel2000mathematical}).

\subsection{Bilevel Second Order Total Generalized Variation}
Higher order regularizers were introduced as a remedy to some shortcomings of first order ones, such as isotropic or anisotropic total variation. Arguably the most well-known artifact introduced by TV is the so-called staircasing effect, which leads to a piecewise constant reconstruction of smooth intensity variations in the image. Indeed, one possibility to counteract such artifacts is the introduction of higher-order derivatives in the image regularization terms. Chambolle and Lions \cite{chambolle1997image}, for instance, proposed a higher-order method by means of an infimal convolution of the total variation of the image and the total variation of the image gradient, called \emph{Infimal Convolution Total Variation (ICTV)}. Other ways to combine first- and second-order regularizers where introduced, for instance, by Chan et al. \cite{chan2000highorder}, who consider total variation minimization together with weighted versions of the Laplacian, by Masnou and Morel \cite{masnou1998level}, who introduced the Euler-elastica functional that combines total variation regularization with curvature penalization, by Parisotto et al. \cite{parisotto2020higher}, who proposed a higher-order anisotropic version of total variation, among other.

In \cite{bredies2010total}, Bredies et al. proposed \emph{Total Generalized Variation (TGV)} as higher-order variants of TV. This family of regularizers has gained popularity thanks to its frequently superior performance compared with TV and ICTV, and has been studied in depth both theoretically and numerically \cite{knoll2011second,martin2013tgvdti,hintermuller2017analytical}. Bilevel problems with TGV lower-level instances have also been investigated in the last years, yielding results on existence and approximability of optimal parameters \cite{reyes2015a}, first-order necessary optimality conditions for scalar and scale-dependent parameters \cite{van2017learning,calatroni2017bilevel}, dualization approaches \cite{hintermuller2017optimal,hintermuller2019generating}, and iterative optimization algorithms \cite{calatroni2017bilevel,reyes2015a}.

In the case of the \emph{Second Order Total Generalized Variation ($TGV^2$)}, the lower-level denoising problem is given by
\begin{mini!}
  {u=(v,w)}{\Dcal_0(v;f) + \sum_{i=1}^{m} Q(\alpha)_i \|(\mathbb K v-w)_i\| + \sum_{i=1}^{n} S(\beta)_i \|(\mathbb{\hat E} w)_i\|_F,}{\label{eq: TGV lower level}}{}
\end{mini!}
where $\Kbb$ stands for the discrete gradient operator and $\mathbb{\hat E}$ for the discrete symmetrized gradient tensor. Thanks to the convexity of the energy, and by using Fenchel duality theory, a necessary and sufficient optimality condition for the denoising problem is given by the primal-dual system:
\begin{subequations}\label{eq:primal_dual_stationarity_lower_level_TGV}
    \begin{align}
        & \nabla_v \Dcal_0(v;f) + \Kbb^\star q = 0,\\
        & q_i   = -\mathbb{\hat E}^\star \hat \Lambda_i, 
        && i=1,\dots,m,\\
        & \scalar{q_i}{(\Kbb v -w)_i} = Q(\alpha)_i \|(\Kbb v-w)_i\|, &&i=1,\dots,m,\\
        & \|q_i\| \le Q(\alpha)_i, && i=1,\dots,m\\
        & \frob{\hat \Lambda_i}{(\mathbb{\hat E} w)_i} = S(\beta)_i \|(\mathbb{\hat E} w)_i\|_F, &&i=1,\dots,n, \label{eq:primal_dual_stationarity_lower_level_TGV_collinear_matrices} \\
        & \|\hat \Lambda_i\|_F \le S(\beta)_i, && i=1,\dots,n. \label{eq:primal_dual_stationarity_lower_level_TGV_bound_matrix}
    \end{align}
\end{subequations}
Since each $q_i$ and each $(\Kbb v-w)_i$ are elements of $\R^2$, we proceed as in the TV case and use the change of variables
\begin{align*}
  q_i = \delta_i[\cos(\theta_i),\sin(\theta_i)]^\intercal, \qquad  (\Kbb v-w)_i = r_i [\cos(\theta_i),\sin(\theta_i)]^\intercal,
\end{align*}
which implies that
\begin{equation*}
  \|q_i\| = (\delta_i^2\cos^2(\theta_i)+\delta_i^2\sin^2(\theta_i))^{1/2} = |\delta_i| \le Q(\alpha)_i.
\end{equation*}
Also
\begin{align*}
    \scalar{q_i}{(\Kbb v-w)_i}
    &=r_i\delta_i(\cos^2(\theta_i)+\sin^2(\theta_i))=r_i\delta_i,\\
    \|(\Kbb v-w)_i\| &= (r_i^2\cos^2(\theta_i)+r_i^2\sin^2(\theta_i))^{1/2} = |r_i|,
\end{align*}
which yields
\begin{equation*}
    r_i\delta_i - Q(\alpha)_i  |r_i| = 0,\quad \forall i=1,\dots,m.
\end{equation*}

Concerning the second-order term,
we obtain, thanks to the symmetry of each matrix 
$$\hat \Lambda_i = \begin{bmatrix}
    \lambda_{1,1}^i & \lambda_{1,2}^i\\
    \lambda_{2,1}^i & \lambda_{2,2}^i
\end{bmatrix},$$ 
that their Frobenius norms are given by $\|\hat \Lambda_i\|_F = \sqrt{(\lambda_{1,1}^i)^2+ 2(\lambda_{1,2}^i)^2+ (\lambda_{2,2}^i)^2}$, which turn out to be equivalent to the Euclidean norm of the transformed vectors
${\Lambda}_i = [\lambda_{1,1}^i, \sqrt{2} \lambda_{1,2}^i,\lambda_{2,2}^i]^\intercal.$
In a similar manner, we introduce the notation $$({\mathbb E} w)_i := \left[ (\mathbb{\hat E} w)^i_{1,1}, \sqrt{2} (\mathbb{\hat E} w)^i_{1,2}, (\mathbb{\hat E} w)^i_{2,2} \right]^\intercal.$$
As a consequence, we may transform the matrix constraints to vector ones and obtain
\begin{subequations} \label{eq:primal_dual_stationarity_lower_level_TGV_collinear_vectors}
  \begin{align}
    & \scalar{{\Lambda}_i}{({\mathbb E} w)_i} = S(\beta)_i \|({\mathbb E} w)_i\|, &&i=1,\dots,n, \label{eq:primal_dual_stationarity_lower_level_TGV_collinear_vectors_1}\\
    & \| \Lambda_i\| \le S(\beta)_i, && i=1,\dots,n. \label{eq:primal_dual_stationarity_lower_level_TGV_collinear_vectors_2}
  \end{align}
\end{subequations}

\Cref{eq:primal_dual_stationarity_lower_level_TGV_collinear_vectors_1} then implies collinearity of the vectors $\Lambda_i$ and $({\mathbb E} w)_i$, for each $i$. Hence, using a \emph{spherical coordinates} change of variables, we get the representation
\begin{align*}
    ({\mathbb E} w)_i &= \rho_i \left[  \sin(\phi_i) \cos(\varphi_i), \sin(\phi_i) \sin(\varphi_i), \cos(\phi_i)\right]^\intercal,\\
     \Lambda_i &= \tau_i [ \sin(\phi_i) \cos(\varphi_i), \sin(\phi_i) \sin(\varphi_i), \cos(\phi_i)]^\intercal,
\end{align*}
which implies that
\begin{align*}
  \scalar{{\Lambda}_i}{({\mathbb E} w)_i}
  & = \rho_i \tau_i (\sin^2(\phi_i) \cos^2(\varphi_i) + \sin^2(\phi_i) \sin^2(\varphi_i) + \cos^2(\phi_i))= \rho_i \tau_i,
\end{align*}
and
\begin{align*}
  \|{\Lambda}_i \| = |\tau_i| \sqrt{\sin^2(\phi_i) \cos^2(\varphi_i) + \sin^2(\phi_i) \sin^2(\varphi_i) + \cos^2(\phi_i)} = |\tau_i|.
\end{align*}
Consequently, system \eqref{eq:primal_dual_stationarity_lower_level_TGV_collinear_vectors} may be rewritten as
\begin{align*}
  \rho_i \tau_i = S(\beta)_i |\rho_i| \qquad \text{and} \qquad |\tau_i| \leq S(\beta)_i, \qquad \forall i=1,\dots,n,
\end{align*}
respectively.

Altogether, we arrive at the following equivalent MPCC reformulation of the second-order TGV bilevel problem:
\begin{mini}< b >
    {\substack{v, w, q, {\Lambda}, \alpha, \beta, r, \delta , \rho, \tau, \theta, \phi, \varphi}}{\Jcal (v)}{\label{eq:bilevel_mpec_formulation_TGV}}{}
    \addConstraint{\nabla_v \Dcal_0(v;f) + \Kbb^\star q}{= 0}
    \addConstraint{q_i}{= -{\mathbb E}^\star \Lambda_i}{\;\quad i=1,\dots,m}
    \addConstraint{(\Kbb v-w)_i}{= r_i [\cos(\theta_i),\sin(\theta_i)]^\intercal,}{\;\quad i=1,\dots,m}
    \addConstraint{q_i}{= \delta_i [\cos(\theta_i),\sin(\theta_i)]^\intercal,}{\;\quad i=1,\dots,m}
    \addConstraint{({\mathbb E} w)_i}{= \rho_i \left[ \sin(\phi_i) \cos(\varphi_i), \sin(\phi_i) \sin(\varphi_i), \cos(\phi_i)\right]^\intercal,}{\; \quad i=1\dots,n}
    \addConstraint{\Lambda_i}{=\tau_i [\sin(\phi_i) \cos(\varphi_i), \sin(\phi_i) \sin(\varphi_i), \cos(\phi_i)]^\intercal,}{\; \quad i=1\dots,n}
    \addConstraint{\delta_i}{\geq 0,}{\;\quad i=1\dots,m}
    \addConstraint{\tau_i}{\ge 0,}{\;\quad i=1\dots,n}
    \addConstraint{0\le r_i \perp (Q(\alpha)_i-\delta_i)}{\ge 0,}{\;\quad i=1\dots,m}
    \addConstraint{0\le \rho_i \perp (S(\beta)_i-\tau_i)}{\ge 0,}{\;\quad i=1\dots,n.}
\end{mini}

\subsection{General Bilevel Problem}\label{sec:general_problem}
For the general problem \eqref{eq:bilevel_problem}, a primal-dual reformulation of the lower-level instance may be carried out in a similar manner as for the TV and TGV cases. The additional difficulty is related with the absolute value terms $|(\mathbb B_j v)_i |$, which we handle also using duality. 

Indeed, introducing the dual variables $c_{j,i}$ we get the extremality conditions
\begin{subequations}
  \begin{align}
    c_{j,i} (\mathbb B_j u)_i &= R_j(\sigma_j)_i |(\mathbb B_j u)_i|, && j=1, \dots, L, \, i=1, \dots,l_j,\label{eq: abs value reformulation 1}\\
    |c_{j,i}| & \leq R_j(\sigma_j)_i, && j=1, \dots, L, \, i=1, \dots,l_j. \label{eq: abs value reformulation 2}
  \end{align}    
\end{subequations}
The constraints \eqref{eq: abs value reformulation 2} are clearly box ones for each $c_{j,i}$. For the constraints \eqref{eq: abs value reformulation 1}, due to the positivity of $R_j(\sigma_j)_i$ and the Cauchy-Schwarz inequality, one direction always holds. Therefore, the constraints can be formulated as
\begin{equation*}
  c_{j,i} (\mathbb B_j u)_i \geq R_j(\sigma_j)_i |(\mathbb B_j u)_i|, \qquad j=1, \dots, L, \, i=1, \dots,l_j.
\end{equation*}
Splitting the absolute value, we get equivalently
\begin{align*}
  -c_{j,i} (\mathbb B_j u)_i &\leq  R_j(\sigma_j)_i (\mathbb B_j u)_i \leq c_{j,i} (\mathbb B_j u)_i, && j=1, \dots, L, \, i=1, \dots,l_j,\\
  c_{j,i} (\mathbb B_j u)_i & \geq 0, && j=1, \dots, L, \, i=1, \dots,l_j.
\end{align*}


Thus, using bilateral constraints for the absolute value terms and changes of variables for the Euclidean and the Frobenius norm terms, as in \Cref{eq:bilevel_mpec_formulation} and \Cref{eq:bilevel_mpec_formulation_TGV}, respectively, the MPCC reformulation of the general bilevel learning problem is given by:
\begin{subequations} \label{eq:bilevel_mpec_formulation_general}
  \begin{equation}
    \underset{u, c, q, \Lambda, \lambda, \sigma, \alpha, \beta, r, \delta, \rho, \tau, \theta, \phi, \varphi}{\minimize} \Jcal (u)
  \end{equation}
  subject to the primal-dual denoising problem:
  \begin{equation} \label{eq:bilevel_mpec_formulation_general_first_constraint}
    \nabla_u \Dcal_0(u) + \sum_{j=1}^K \sum_{i=1}^{k_j} P_j(\lambda_j)_i \nabla_u (\Dcal_j(u)_{i})+ 
    \sum_{j=1}^L \mathbb B_j^\intercal c_j +
    \sum_{j=1}^M \mathbb K_j^\star q_j+
    \sum_{j=1}^N \mathbb E_j^\star \Lambda_j
    = 0,
  \end{equation}
  the changes of variables of primal and dual variables of the Euclidean and Frobenius norm terms:
  \begin{align}
    (\Kbb_j u)_i &= r_{j,i} [\cos(\theta_{j,i}),\sin(\theta_{j,i})]^\intercal, && j=1,\dots,M, i=1,\dots,m_j\\
    q_{j,i} &= \delta_{j,i} [\cos(\theta_{j,i}),\sin(\theta_{j,i})]^\intercal, && j=1,\dots,M, i=1,\dots,m_j\\
    (\mathbb E_j u)_i &= \rho_{j,i} \left[ \sin(\phi_{j,i}) \cos(\varphi_{j,i}), \sin(\phi_{j,i}) \sin(\varphi_{j,i}), \cos(\phi_{j,i})\right]^\intercal, && j=1,\dots,N, i=1,\dots,n_j\\
    \Lambda_{j,i} &=\tau_{j,i} \left[ \sin(\phi_{j,i}) \cos(\varphi_{j,i}), \sin(\phi_{j,i}) \sin(\varphi_{j,i}), \cos(\phi_{j,i}) \right]^\intercal, && j=1,\dots,N, i=1,\dots,n_j
  \end{align}
  the inequality constraints for the absolute value terms:
  \begin{align}
    -c_{j,i} (\mathbb B_j u)_i &\leq  R_j(\sigma_j)_i (\mathbb B_j u)_i \leq c_{j,i} (\mathbb B_j u)_i, && j=1, \dots, L, \, i=1, \dots,l_j,\\
    -R_j(\sigma_j)_i &\leq c_{j,i} \leq R_j(\sigma_j)_i, && j=1, \dots, L, \, i=1, \dots,l_j,\\
    0 & \leq c_{j,i} (\mathbb B_j u)_i , && j=1, \dots, L, \, i=1, \dots,l_j,
  \end{align}
  the positivity constraints
  \begin{align}
    R_j(\sigma_j)_i &\geq 0, && j=1, \dots, L, \, i=1, \dots,l_j\\
    P_j(\lambda_j)_i & \geq 0, && j=2, \dots, L, \, i=1, \dots,k_j,\\
    \delta_{j,i}, &\geq 0, && j=1, \dots, M, \, i=1, \dots,m_j,\\
    \tau_{j,i}, & \geq 0, && j=1, \dots, N, \, i=1, \dots,n_j,
  \end{align}
  and the complementarity constraints
  \begin{align}
    0 &\le r_{j,i} \perp (Q_j(\alpha_j)_i-\delta_{j,i})\ge 0, && j=1, \dots, M, \, i=1, \dots,m_j,\\
    0 &\le \rho_{j,i} \perp (S_j(\beta_j)_i-\tau_{j,i})\ge 0, && j=1, \dots, N, \, i=1, \dots,n_j. \label{eq:bilevel_mpec_formulation_general_final_constraint}
  \end{align}
\end{subequations}

\section{Stationarity Conditions}\label{sec:stationarity_conditions}
For mathematical programs with complementarity constraints, it is of particular importance to characterize local optimal solutions by means of an optimality system as sharp as possible. To do so, taylored constraint qualification conditions are required to hold in order to get existence of Lagrange multipiers and establish sign-conditions on the so-called biactive set.

Consider the general MPCC given by:
\begin{mini!}
  {\substack{x \in \mathbb R^n}}{\ell(x)}{\label{eq:MPCC}}{}
  \addConstraint{h(x)}{= 0}
  \addConstraint{g(x)}{\leq0}
  \addConstraint{0\leq M(x)}{\perp N(x)\geq 0,}
\end{mini!}
where $\ell: \R^n \to \R$, $h:\R^n \to \R^p$, $g:\R^n \to \R^m$ and $M,N:\R^n \to \R^q$ are continuously differentiable functions. We denote the feasible set by $$X=\{x\in\R^n: g(x)\leq 0,h(x)=0, 0 \leq M(x) \perp N(x) \geq 0\}$$ and introduce the following index sets for a given feasible point $x\in X$:
\begin{align*}
  \Acal(x)&{}:=\{i:M_i(x)=0,N_i(x)>0\},\\
  \Bcal(x)&{}:=\{i:M_i(x)=0,N_i(x)=0\},\\
  \Ical(x)&{}:=\{i:M_i(x)>0,N_i(x)=0\}.
\end{align*}
The sets $\Acal(x)$, $\Ical(x)$ and $\Bcal(x)$ are called \emph{active, inactive} and \emph{biactive} sets, respectively. Moreover, for a given index set, where a set of relations hold, we introduce for simplicity the short notation
$\{ e(x)_i =0\}:=\{i \in \{1, \dots, n\}: e(x)_i =0\}.$

To prove existence of Lagrange multipliers for \eqref{eq:MPCC} a suitable constraint qualification condition has to be satisfied. For this type of problems, however, it can be shown that standard nonlinear programming constraint qualification conditions such as \emph{Linear Independence Constraint Qualification (LICQ)} or \emph{Mangasarian-Fromovitz Constraint Qualification (MFCQ)} do not hold, even for very simple problem instances (see, e.g., \cite{dlrvillacis2021}). Therefore, tailor-made constraint qualification conditions have been proposed in last years for MPCC, along with different notions of stationarity such as \emph{Clarke-stationarity}, \emph{Mordukovich-stationarity} and \emph{Strong-stationarity} \cite{flegel2005guignard,flegel2007optimality}.

To formulate different MPCC constraint qualification conditions, let us start by defining the tangent cone $\Tcal_X(x)$ and the MPCC-linearized tangent cone $\Lcal_X^{MPCC}(x)$.
\begin{definition}
  Let $x^*\in X$ be feasible for \eqref{eq:MPCC}.
  \begin{itemize}
    \item The tangent (Bouligand) cone to a set $X$ at the point $x^*$ is given by
    \begin{equation*}
        \Tcal_X(x^*):=\{d\in\R^n:\;\exists x_k\xrightarrow{X} x^*,\; \exists t_k \downarrow 0:\;t_k^{-1} (x_k-x^*)\to d\}.
    \end{equation*}
    \item The MPCC-Linearized Tangent Cone at $x^*\in X$ is given by
        \begin{align*}
            L_X^{MPCC}(x^*):=\{d\in\R^n:\;&\nabla g_i(x^*)^\intercal d\le 0,&&\;\forall i\in \{g_i(x^*)=0 \},\\
            &\nabla h_i(x^*)^\intercal d = 0,&&\;\forall i=1,\dots,p,\\
            &\nabla M_i(x^*)^\intercal d = 0,&&\;\forall i\in\Acal(x^*),\\
            &\nabla N_i(x^*)^\intercal d = 0,&&\;\forall i\in\Ical(x^*),\\
            &\nabla M_i(x^*)^\intercal d \ge 0,&&\;\forall i\in\Bcal(x^*),\\
            &\nabla N_i(x^*)^\intercal d \ge 0,&&\;\forall i\in\Bcal(x^*),\\
            &(\nabla M_i(x^*)^\intercal d)(\nabla N_i(x^*)^\intercal d) = 0,&&\;\forall i\in\Bcal(x^*)\}.
        \end{align*}
  \end{itemize}
\end{definition}

Depending on the relation between these two cones and/or their polars, different MPCC constraint qualification conditions may be established.
\begin{definition}
Let $x^*\in X$ be feasible for \eqref{eq:MPCC}.
\begin{itemize}
\item[a)] MPCC-Abadie Constraint Qualification (MPCC-ACQ) holds at $x^*$ if
    \begin{equation*}
        \Tcal_X(x^*) = L_X^{MPCC}(x^*).
    \end{equation*}
\item[b)] MPCC-Relaxed Constant Positive Linear Dependence Condition (MPCC-RCPLD). Let $I_1 \subset \{1, \dots, p\}$ be such that $\{\nabla h_i(x^*)\}_{i \in I_1}$ is a basis for $\operatorname{span} \{\nabla h_i(x^*)\}_{i=1, \dots,p}$. MPCC-RCPLD holds at $x^*$ if there is a neighborhood $V(x^*)$ of $x^*$ such that
\begin{itemize}
  \item[i)] $\{\nabla h_i(x)\}_{i=1, \dots,p}$ has the same rank for every $x \in V(x^*)$;
  \item[ii)] For any $I_2 \subset \{i: g_i(x^*)=0 \}$, $I_3 \subset \Acal(x^*) \cup \Bcal(x^*)$ and $I_4 \subset \Ical(x^*) \cup \Bcal(x^*)$, whenever there exist multipliers, not all zero,
  $(\lambda, \mu, \gamma, \nu)$ with $\mu_i \geq 0$ for each $i \in I_1$, either $\gamma_j \nu_j =0 $ or $\gamma_j > 0$, $\nu_j > 0$ for each $j \in \Bcal(x^*)$, such that
  \begin{equation*}
      \sum_{i \in I_1} \lambda_i \nabla h_i(x^*) + \sum_{i \in I_2} \mu_{i} \nabla g_{i}(x^*) - \sum_{i \in I_3} \gamma_i \nabla M_i(x^*) - \sum_{i \in I_4} \nu_i \nabla N_i(x^*) = 0,
  \end{equation*}
  then the vectors
  \begin{equation*}
      \nabla h_i(x), ~i \in I_1, \quad
      \nabla g_i(x), ~i \in I_2, \quad
      \nabla M_i(x), ~i \in I_3, \quad
      \nabla N_i(x), ~ i \in I_4
  \end{equation*}
  are linearly dependent for any $x \in V(x^*)$.
\end{itemize}

\item[d)] MPCC-Generalized Mangasarian Fromowitz Constraint Qualification (MPCC-GMFCQ) holds at $x^*$ if there is no nonzero vector  $(\lambda,\mu,\gamma,\nu)$ such that
    \begin{align*}
        & \sum_{i=1}^p \lambda_i \nabla h_i(x^*) + \sum_{\{g_i(x^*)=0 \}} \mu_i \nabla g_i(x^*) - \sum_{i=1}^q \gamma_i \nabla M_i(x^*) - \sum_{i=1}^q \nu_i \nabla N_i(x^*) = 0,\\
        & \mu_i \geq 0, ~ \forall i \in \{g_i(x^*)=0 \}, \quad \gamma_i = 0, ~\forall i \in \Ical(x^*),\quad \nu_i = 0, ~\forall i \in \Acal(x^*),\\
        & \text{either } \gamma_i>0, ~ \nu_i>0 \quad \text{or} \quad \nu_i \gamma_i=0, \quad \forall i \in \Bcal(x^*).
    \end{align*}
\item[e)] Partial MPCC-Linear Independence Constraint Qualification holds at $x^*$ if
    \begin{equation*}
        \sum_{i=1}^p \lambda_i \nabla h_i(x^*) + \sum_{\{g_i(x^*)=0 \}} \mu_i \nabla g_i(x^*) - \sum_{\Acal(x^*) \cup \Bcal(x^*)} \gamma_i \nabla M_i(x^*) - \sum_{\Ical(x^*) \cup \Bcal(x^*)} \nu_i \nabla N_i(x^*) = 0
    \end{equation*}
    implies that $\gamma_i=0$ and $\nu_i =0$, for all $i \in \Bcal(x^*)$.
\end{itemize}
\end{definition}
The MPCC-RCPLD is one of the weakest verifiable constraint qualification conditions that leads to M-stationarity (see \cite{guo2013second} and the references therein).
Moreover, the following implications hold for the different qualification conditions introduced above (see, e.g., \cite{guo2013second} ): MPCC-GMFC $\Rightarrow$ MPCC-RCPLD $\Rightarrow$ MPCC-ACQ.

A feasible point $x^* \in X$ is called \textbf{C-stationary} (\textbf{C} for Clarke) for \eqref{eq:MPCC} if there exist multipliers $\lambda \in \R^p, \mu \in \R^m$ and $\gamma, \nu \in \R^q$, such that the following system is satisfied:
\begin{subequations} \label{eq:clarke_stationarity_system}
\begin{align}
    \nabla \ell(x^*) + \nabla h(x^*)\lambda + \nabla g(x^*) \mu - \nabla M(x^*) \gamma - \nabla N(x^*)\nu & = 0,\\
    0 \ge g(x^*)\perp \mu & \ge 0,\\
    h(x^*) & = 0,\\
    M(x^*) & \ge 0,\\
    N(x^*) & \ge 0,\\
    \nu_i &=0,\; &&\forall i\in \Acal(x^*),\\
    \gamma_i &= 0, \;&&\forall i\in \Ical(x^*),\\
    \gamma_i \nu_i &\ge 0,\;&&\forall i\in \Bcal(x^*).
\end{align}
\end{subequations}
If the last condition in \eqref{eq:clarke_stationarity_system} is replaced by the sharper characterization:
\begin{equation*}
    \gamma_i\nu_i=0 \quad \vee \quad \gamma_i\ge 0,\nu_i \ge 0,\qquad \text{for all } i\in\Bcal(x^*),
\end{equation*}
then the system is called \textbf{M-stationary} (\textbf{M} for Mordukhovich), which holds under MPCC-ACQ \cite{JaneYe2005}, and the corresponding multiplier $(\lambda, \mu, \gamma, \nu)$ is called \underline{M-multiplier}.
The sharpest optimality condition is called \textbf{S-stationary} (\textbf{S} for strong), where, in addition to \eqref{eq:clarke_stationarity_system}, the following sign condition on the biactive set holds:
\begin{equation*}
    \gamma_i\ge 0,\nu_i \ge 0,\qquad \text{for all } i\in\Bcal(x^*).
\end{equation*}
The corresponding multiplier is called \underline{S-multiplier}.

\begin{theorem}[Flegel, Kanzow \cite{flegel2005guignard}] \label{them: strong stationarity}
Let $x^* \in X$ be a local optimal solution of problem \eqref{eq:MPCC}. If both MPCC-ACQ and partial MPCC-LICQ hold at $x^*$, then $x^*$ is S-stationary.
\end{theorem}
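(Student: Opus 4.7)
The plan is to start from the standard variational consequence of local optimality and then exploit the piecewise-polyhedral structure of the MPCC-linearized tangent cone together with the uniqueness of biactive multipliers coming from partial MPCC-LICQ.

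First I would record the basic geometric fact: since $x^*$ is a local minimizer of $\ell$ on $X$, we have $-\nabla \ell(x^*) \in \Tcal_X(x^*)^\circ$. MPCC-ACQ gives $\Tcal_X(x^*) = L_X^{MPCC}(x^*)$ and therefore $-\nabla \ell(x^*) \in L_X^{MPCC}(x^*)^\circ$. This reduces everything to polarizing the MPCC-linearized cone, which is purely a linear-algebraic task.

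Next I would decompose $L_X^{MPCC}(x^*)$ as a finite union of convex polyhedral cones, one per subset $S \subseteq \Bcal(x^*)$, by splitting the disjunctive biactive conditions: set $C_S$ to be the cone defined by the linear/equality constraints inherited from $\Acal(x^*)$ and $\Ical(x^*)$, together with $\nabla M_i(x^*)^\intercal d = 0$, $\nabla N_i(x^*)^\intercal d \ge 0$ for $i \in S$ and $\nabla M_i(x^*)^\intercal d \ge 0$, $\nabla N_i(x^*)^\intercal d = 0$ for $i \in \Bcal(x^*) \setminus S$. Then $L_X^{MPCC}(x^*) = \bigcup_S C_S$, hence $L_X^{MPCC}(x^*)^\circ = \bigcap_S C_S^\circ$. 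Applying Farkas' lemma to each $C_S$ yields, for every $S$, multipliers $(\lambda^S, \mu^S, \gamma^S, \nu^S)$ that satisfy the stationarity equation together with $\mu^S \ge 0$, the support conditions $\nu^S_i = 0$ on $\Acal(x^*)$ and $\gamma^S_i = 0$ on $\Ical(x^*)$, and the partition-dependent sign conditions $\nu^S_i \ge 0$ for $i \in S$ and $\gamma^S_i \ge 0$ for $i \in \Bcal(x^*)\setminus S$ (the other biactive multipliers being free).

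The crucial step, and the place where partial MPCC-LICQ enters, is to show that the biactive components $(\gamma_i, \nu_i)_{i \in \Bcal(x^*)}$ are in fact independent of $S$. Indeed, two sets of multipliers satisfying the same stationarity equation differ by an element in the kernel of the map
\begin{equation*}
(\lambda,\mu,\gamma,\nu) \mapsto \sum_i \lambda_i \nabla h_i + \sum_{g_i=0} \mu_i \nabla g_i - \sum_{\Acal \cup \Bcal} \gamma_i \nabla M_i - \sum_{\Ical \cup \Bcal} \nu_i \nabla N_i,
\end{equation*}
and partial MPCC-LICQ states precisely that any such kernel element has vanishing biactive components. Consequently $\gamma^S_i = \gamma_i$ and $\nu^S_i = \nu_i$ on $\Bcal(x^*)$ for every $S$. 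Specializing to $S = \Bcal(x^*)$ yields $\nu_i \ge 0$ on $\Bcal(x^*)$, and specializing to $S = \emptyset$ yields $\gamma_i \ge 0$ on $\Bcal(x^*)$; together these give the S-stationarity sign condition.

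The main obstacle is the second step: verifying rigorously that $L_X^{MPCC}(x^*)^\circ = \bigcap_{S} C_S^\circ$ and that the Farkas certificates obtained on each $C_S$ can be patched into one single multiplier tuple. The polar of a union is straightforward, but the patching requires the uniqueness of biactive multipliers, so the delicate part is really the application of partial MPCC-LICQ to reconcile the different Farkas certificates across all $2^{|\Bcal(x^*)|}$ partitions. Everything else is routine once this uniqueness is in hand.
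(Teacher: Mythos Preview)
The paper does not supply its own proof of this theorem: it is stated with attribution to Flegel and Kanzow \cite{flegel2005guignard} and then used as a black box in the proofs of \Cref{thm: strong stationarity MPCC} and \Cref{thm: strongg stationarity general problem}. So there is no ``paper's proof'' to compare against.

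That said, your argument is essentially the original Flegel--Kanzow proof and is correct. The decomposition of $L_X^{MPCC}(x^*)$ into the polyhedral branches $C_S$, the passage to $\bigcap_S C_S^\circ$, the Farkas representation on each branch, and the use of partial MPCC-LICQ to force uniqueness of the biactive multiplier components across all branches is exactly the mechanism in \cite{flegel2005guignard}. One small point worth making explicit in a full write-up: once the biactive components $(\gamma_i,\nu_i)_{i\in\Bcal(x^*)}$ are shown to coincide across all $S$, you still need to exhibit a \emph{single} multiplier tuple with $\mu\ge 0$ and both sign conditions on $\Bcal(x^*)$; taking the full tuple from, say, $S=\emptyset$ gives $\mu^\emptyset\ge 0$ and $\gamma_i\ge 0$ on $\Bcal(x^*)$, and the uniqueness then transfers $\nu_i\ge 0$ from the $S=\Bcal(x^*)$ branch to this same tuple. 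You allude to this, but it is the one place a reader might stumble.
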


The strong stationarity system is the sharpest possible set of relations that characterize minima of MPCC problems. In the case of an empty biactive set, all stationarity concepts presented previously coincide.

\subsection{Bilevel Total Variation}
Concerning bilevel imaging learning problems with total variation, Clarke stationarity has been previously obtained in \cite{de2013image}, and Mordukovich stationarity was proved for blind point deconvolution in \cite{hintermuller2015bilevel} and for denoising problems in \cite{dlrvillacis2021,delosreyesvillacis}. Next, we will introduce an alternative technique for proving \emph{M-stationarity} and will investigate under which conditions \emph{strong stationarity} also holds for this family of problems.

Let us recall the reformulated bilevel learning problem with total variation:
\begin{mini!}
    {\substack{x=(u,q, \alpha,r,\delta, \theta)}}{\Jcal (u)}{\label{eq:bilevel_mpec_formulationTV}}{}
    \addConstraint{\nabla_u \Dcal_0(u;f)+ \Kbb^\star q}{= 0}{}{\label{eq:bilevel_mpec_formulationTV constraint_1}}
    \addConstraint{(\Kbb u)_i}{= r_i [\cos(\theta_i),\sin(\theta_i)]^\intercal,}{\quad i=1,\dots,m}
    \addConstraint{q_i}{= \delta_i [\cos(\theta_i),\sin(\theta_i)]^\intercal,}{\quad i=1,\dots,m}{\label{eq:bilevel_mpec_formulationTV constraint_3}}
    \addConstraint{\delta_i}{\ge 0,}{\quad i=1\dots,m}
    \addConstraint{0}{\le r_i \perp (Q(\alpha)_i-\delta_i) \ge 0,}{\quad i=1\dots,m,}{\label{eq:bilevel_mpec_formulationTV constraint_6}}
\end{mini!}
and let us introduce the active, inactive and biactive sets:
\begin{align*}
  \Acal (x)&{}:=\{i:r_{i} =0, Q(\alpha)_i> \delta_{i} \},\\
  \Ical (x)&{}:=\{i:r_{i} >0, Q(\alpha)_i= \delta_{i}\},\\
  \Bcal (x)&{}:=\{i:r_{i} =0, Q(\alpha)_i= \delta_{i}\}.
\end{align*}


\begin{theorem} \label{thm: strong stationarity MPCC}
 Let $x^*=(u^*,q^*,\alpha^*,r^*,\delta^*,\theta^*)$ be a local optimal solution to \eqref{eq:bilevel_mpec_formulationTV}. Assume
 that the set $\mathcal T(x^*):=\{i: r_i^*= \delta^*_i = 0 \}$ is empty, and that if $\xi \neq 0$ is such that $\nabla_\alpha Q(\alpha^*)_{\mathcal Z} ~\xi  =0$, where $\mathcal Z := \{ i: \delta_i^* = Q(\alpha_i^*)=0 \}$,
 then $\xi$ has both positive and negative components. Moreover, assume that $\cos(\theta^*_i) \neq 0, ~\forall i \in \Acal(x^*) \cup \Bcal(x^*),$ and that $\left[ (K_y)_{\Acal(x^*) \cup \Bcal(x^*)} - \diag(\tan(\theta_{\Acal(x^*) \cup \Bcal(x^*)})) (K_x)_{\Acal(x^*) \cup \Bcal(x^*)} \right]^\intercal$ is full-rank, for any $x$ in a neighbourhood of $x^*$.
 Then there exist Lagrange multipliers $p \in \R^d$ and
  $\lambda^x, \lambda^y, \varphi^x, \varphi^y, \rho, \sigma, \gamma, \nu \in \R^m$	such that, together with equations \eqref{eq:bilevel_mpec_formulationTV constraint_1}-\eqref{eq:bilevel_mpec_formulationTV constraint_6}, the following \textbf{M-stationarity} system is satisfied:
\begin{subequations} \label{eq:strong_stationarity_TV}
\begin{align}
    \nabla_u \Jcal(u^*) - \nabla_{uu}^2 \Dcal_0 (u^*)^\intercal p - \Kbb^\intercal \varphi & = 0, \label{eq:strong_stationarity_TV_1}\\
    \nabla_\alpha Q(\alpha^*) ~\nu & =0 \label{eq:strong_stationarity_TV_2}\\
    - K_x p  + \lambda^x &=0 \label{eq:strong_stationarity_TV_3}\\
    - K_y p + \lambda^y &=0 \label{eq:strong_stationarity_TV_4}\\
    \cos(\theta^*) \circ \varphi^x + \sin(\theta^*) \circ \varphi^y - \gamma & =0 \label{eq:strong_stationarity_TV_5}\\
    -\cos(\theta^*) \circ \lambda^x -\sin(\theta^*) \circ \lambda^y - \sigma + \nu & =0 \label{eq:strong_stationarity_TV_6}\\
    \sin(\theta^*) \circ \left( -r^* \circ \varphi^x + \delta^* \circ \lambda^x \right) -\cos(\theta^*) \circ \left( - r^* \circ \varphi^y + \delta^* \circ \lambda^y \right)& =0 \label{eq:strong_stationarity_TV_7}\\
    0 \le \delta^* \perp \sigma & \ge 0, \label{eq:strong_stationarity_TV_9}\\
    \nu_i &=0,\; &&\forall i\in \Acal(x^*), \label{eq:strong_stationarity_TV_10}\\
    \gamma_i &= 0, \;&&\forall i\in \Ical(x^*), \label{eq:strong_stationarity_TV_11}\\
    \gamma_i  \nu_i=0 \lor \gamma_i \ge 0,~ \nu_i &\ge 0,\;&&\forall i\in \Bcal(x^*), \label{eq:strong_stationarity_TV_12}
\end{align}
\end{subequations}
where $\circ$ stands for the Hadamard product.
If, in addition,
\begin{equation}
 \nabla_\alpha Q(\alpha^*)_{\{\delta^*_i = Q(\alpha^*)_i \}} \zeta =0 \implies  \zeta_i =0, ~\forall i \in \Bcal (x^*), \label{eq: s stationarity extra hypothesis}
\end{equation}
and $q|_{\Acal^* \cup \Bcal^*} \in \operatorname{range} (\Kbb_{\Acal^* \cup \Bcal^*})$,
then $x^*$ is \textbf{S-stationary} and \Cref{eq:strong_stationarity_TV_12} is replaced by
\begin{equation} \label{eq:strong_stationarity_TV_Strong}
  \gamma_i \ge 0,~ \nu_i \ge 0, \quad \forall i\in \Bcal(x^*).
\end{equation}
\end{theorem}
\begin{proof}
We start by writing \eqref{eq:bilevel_mpec_formulationTV} in the form of \eqref{eq:MPCC}. To do so, we introduce the vector $x = (u,q,\alpha,r,\delta,\theta)^\intercal \in \R^{n}$, with $n=d+|\alpha|+ 5m$, and define $h: \R^{n} \to \R^{d+4m}$ by
\begin{equation*}
  h(x)=
  \begin{bmatrix}
    & - \nabla_u \Dcal_0 (u) - \Kbb^\star q\\
    & -K_x u + r \circ \cos (\theta)\\
    & -K_y u + r \circ \sin (\theta)\\
    & q^x - \delta \circ \cos (\theta)\\
    & q^y - \delta \circ \sin (\theta)
  \end{bmatrix},
\end{equation*}
where we used the structure of the TV problem to rewrite the constraints in terms of the discrete partial derivative operators. In addition, we define $g: \R^{n} \to \R^{m}$ by $g(x)=-\delta$
and $M: \R^{n} \to \R^{m}$, $N: \R^{n} \to \R^{m}$ by $M(x)= r$ and $N(x)= Q(\alpha) -\delta$, respectively.

The gradients of the constraints are then given by
\begin{equation*}
  \nabla h(x)=
  \begin{bsmallmatrix}
    & -\nabla^2_{u u} \Dcal_0 (u)^\intercal & -K_x^\intercal & -K_y^\intercal &\mathbf{0} &\mathbf{0}\\
    & -K_x    &\mathbf{0} &\mathbf{0} & I &\mathbf{0}\\
    & -K_y    &\mathbf{0} &\mathbf{0} &\mathbf{0} & I\\
    & \mathbf{0}  &\mathbf{0} &\mathbf{0} &\mathbf{0} &\mathbf{0}\\
    & \mathbf{0} & \Cos (\theta) & \Sin (\theta) &\mathbf{0} &\mathbf{0}\\
    & \mathbf{0} & \mathbf{0} & \mathbf{0} & -\Cos (\theta) & -\Sin (\theta)\\
    & \mathbf{0} & -R \Sin(\theta) & R \Cos(\theta) & D \Sin(\theta)) &-D  \Cos(\theta))
  \end{bsmallmatrix},
\end{equation*}
  \begin{equation*}
    \nabla g(x)=
    \begin{bsmallmatrix}
      & \mathbf{0}\\
      & \mathbf{0}\\
      & \mathbf{0}\\
      & \mathbf{0}\\
      & \mathbf{0}\\
      &-I\\
      &\mathbf{0}
    \end{bsmallmatrix}, \qquad
  \nabla M(x)=
  \begin{bsmallmatrix}
    & \mathbf{0} \\
    & \mathbf{0} \\
    & \mathbf{0} \\
    & \mathbf{0}\\
    & I\\
    & \mathbf{0}\\
    & \mathbf{0}
  \end{bsmallmatrix}, \qquad
  \nabla N(x)=
  \begin{bsmallmatrix}
    & \mathbf{0} \\
    & \mathbf{0} \\
    & \mathbf{0} \\
    & \nabla_\alpha Q(\alpha)\\
    & \mathbf{0}\\
    & -I\\
    & \mathbf{0}
  \end{bsmallmatrix},
\end{equation*}
where we used the diagonal matrix notation $\Cos(\theta) := \diag(\cos(\theta))$, $\Sin(\theta) := \diag(\sin(\theta))$, $R := \diag(r)$ and $D := \diag(\delta)$.

To verify MPCC-RCPLD, let us start by analyzing the rank of the equality constraints matrix $\nabla h(x)$, with $x$ sufficiently close to $x^*$. The linear system $\sum_{i=1}^{d+4m} \psi_i \nabla h_i(x)=0$ can be reformulated, using $\psi = (v,\varphi,\pi)$, as
\begin{subequations} \label{eq: linear system in h CQ for TV 0}
  \begin{align}
    & \nabla^2_{u u} \Dcal_0 (u)^\intercal v + K_x^\intercal \varphi^x + K_y^\intercal \varphi^y = 0 \label{eq: linear system in h CQ for TV 1}\\
    & K_x v - \pi^x =0 \label{eq: linear system in h CQ for TV 2}\\
    & K_y v - \pi^y =0 \label{eq: linear system in h CQ for TV 3}\\
    & \Cos(\theta) \varphi^x + \Sin(\theta) \varphi^y = 0 \label{eq: linear system in h CQ for TV 4}\\
    & -\Cos(\theta) \pi^x - \Sin(\theta) \pi^y = 0 \label{eq: linear system in h CQ for TV 5}\\
    & - R \Sin(\theta) \varphi^x + R \Cos(\theta) \varphi^y + D \Sin(\theta) \pi^x - D \Cos(\theta) \pi^y =0, \label{eq: linear system in h CQ for TV 6}
  \end{align}
\end{subequations}

From \Cref{eq: linear system in h CQ for TV 2} and \Cref{eq: linear system in h CQ for TV 3} it immediately follows that $\pi = \Kbb v$. Moreover, from equations \eqref{eq: linear system in h CQ for TV 5}-\eqref{eq: linear system in h CQ for TV 6}, and since $\mathcal T(x) \subset \mathcal T(x^*) = \emptyset$, for all $x$ in a neighbourhood of $x^*$, we also get that
\begin{equation*}
  \begin{pmatrix}
    -\cos (\theta_i) & -\sin (\theta_i)\\
     \sin (\theta_i) &  -\cos (\theta_i)
  \end{pmatrix}
  \begin{pmatrix}
    \pi^x_i\\
    \pi^y_i
  \end{pmatrix}
  =
  \begin{pmatrix}
    0\\0
  \end{pmatrix}, \quad \forall i \in \Acal(x) \cup \Bcal(x),
\end{equation*}
which, thanks to the orthogonality of the matrix, implies that $\pi_i=(\Kbb v)_i=0, ~ \forall i \in \Acal(x) \cup \Bcal(x).$

Combining \Cref{eq: linear system in h CQ for TV 4} and \Cref{eq: linear system in h CQ for TV 6} we obtain, for $i \in \Ical(x)$, that
\begin{equation*}
  \begin{pmatrix}
    \cos (\theta_i) & \sin (\theta_i)\\
     -r_i \sin (\theta_i) &  r_i \cos (\theta_i)
  \end{pmatrix}
  \begin{pmatrix}
    \varphi^x_i\\
    \varphi^y_i
  \end{pmatrix}
  =
  \begin{pmatrix}
    0\\-\delta_i \left(\sin(\theta_i) \pi^x_i -\cos(\theta_i)\pi^y_i \right)
  \end{pmatrix},
\end{equation*}
Since $r_i >0,~\forall i \in \Ical(x)$, the matrix on the left hand side becomes orthogonal and we obtain
\begin{align*}
  \begin{pmatrix}
    \varphi^x_i\\
    \varphi^y_i
  \end{pmatrix}
  & =
  \begin{pmatrix}
    \cos (\theta_i) & -\sin (\theta_i)\\
     \sin (\theta_i) &  \cos (\theta_i)
  \end{pmatrix}
  \begin{pmatrix}
    0\\-\delta_i r_i^{-1} \left(\sin(\theta_i) (K_x v)_i -\cos(\theta_i)(K_y v)_i \right)
  \end{pmatrix}\\
  &=Q(\alpha)_i r_i^{-1}
  \begin{pmatrix}
    \sin(\theta_i)^2 (K_x v)_i- \sin(\theta_i) \cos(\theta_i) (K_y v)_i\\
    - \sin(\theta_i) \cos(\theta_i) (K_x v)_i +\cos(\theta_i)^2(K_y v)_i
  \end{pmatrix},
\end{align*}
which implies that
\begin{equation} \label{eq: RCPLD value of multiplier for grad h on I}
  \varphi_i = \frac{Q(\alpha)_i}{\|(\Kbb u)_i\|} \left( (\Kbb v)_i- \frac{\scalar{(\Kbb u)_i}{(\Kbb v)_i}}{\|(\Kbb u^*)_i\|^2} (\Kbb u)_i \right), \quad \forall i \in \Ical(x).
\end{equation}
Multiplying the latter with $(\Kbb v)_i$ we then obtain that $\scalar{\varphi_i}{(\Kbb v)_i} \geq 0$, for all $i \in \Ical(x)$.

Altogether, we arrive at the reduced system
\begin{subequations} \label{eq: reduced system grad h}
  \begin{align}
    & \nabla^2_{u u} \Dcal_0 (u)^\intercal v + \Kbb^\star \varphi = 0,\\
    & \varphi_i = \frac{Q(\alpha)_i}{|(\Kbb u)_i|} \left( (\Kbb v)_i- \frac{\scalar{(\Kbb u)_i}{(\Kbb v)_i}}{|(\Kbb u^*)_i|^2} (\Kbb u)_i \right), &&\forall i \in  \Ical(x),\\
    & (\Kbb v)_i =0, &&\forall i \in  \Acal(x) \cup \Bcal(x).
  \end{align}
\end{subequations}
Thanks to the continuity and positive definiteness of $\nabla^2_{u u} \Dcal_0 (u^*)$, and since $\scalar{\varphi_i}{(\Kbb v)_i} \geq 0$, for all $i \in \Ical(x)$, the vector $v=\mathbf{0}$ is the unique solution to the reduced system \eqref{eq: reduced system grad h}, which directly implies that $\varphi_i=0$, for all $i \in \Ical(x)$, and also $\Kbb^\star \varphi = 0$. Together with \Cref{eq: linear system in h CQ for TV 4}, we then obtain 
\begin{equation} \label{eq: reduced linear system RCPLD rank of grad h}  
  \begin{pmatrix}
    K_x^\intercal &K_y^\intercal\\
    \chi_{\Ical(x)}+ \chi_{\Acal(x) \cup \Bcal(x)} \Cos(\theta) &\chi_{\Acal(x) \cup \Bcal(x)} \Sin(\theta)\\
    0 & \chi_{\Ical(x)}
  \end{pmatrix}
  \begin{pmatrix}
    \varphi^x\\
    \varphi^y
  \end{pmatrix}
  =
  \begin{pmatrix}
    0\\
    0\\
    0
  \end{pmatrix},
\end{equation}
where $\chi_E$ denotes the diagonal matrix with the indicator function of a set $E$ on the main diagonal. Since $\Acal(x) \cup \Bcal(x) \subset \Acal(x^*) \cup \Bcal(x^*)$ and, by hypotheses, $\cos(\theta^*_i) \neq 0$, for all $i \in \Acal(x^*) \cup \Bcal(x^*)$, there exists a neighbourhood $V(x^*)$ such that, for any $x \in V(x^*)$, $\cos(\theta_i) \neq 0$, $\forall i \in \Acal(x) \cup \Bcal(x).$ Moreover, since $\left[ (K_y)_{\Acal(x^*) \cup \Bcal(x^*)} - \Tan(\theta_{\Acal(x^*) \cup \Bcal(x^*)}) (K_x)_{\Acal(x^*) \cup \Bcal(x^*)} \right]^\intercal$ is full-rank, for any $x \in V(x^*)$,
it can be easily verified that $\varphi =0$ is the unique solution to \eqref{eq: reduced linear system RCPLD rank of grad h}. Therefore, the matrix $\nabla h(x)$ is full-rank for any $x \in V(x^*)$, and necessarily $I_1 = \{1,\dots,p\}$.

Let now $I_2 \subset \{i: g_i(x^*)=0 \}$, $I_3 \subset \Acal(x^*) \cup \Bcal(x^*)$ and $I_4 \subset \Ical(x^*) \cup \Bcal(x^*)$ be arbitrary but fix, and let us analyze the solutions $(\psi, \mu, \gamma, \nu)$ to
\begin{subequations}\label{eq: positive linear system}
  \begin{align}
    & \sum_{i=1}^{d+4m} \psi_i \nabla h_i(x^*) + \sum_{I_2} \mu_i \nabla g_i(x^*) - \sum_{I_3} \gamma_i \nabla M_i(x^*) - \sum_{I_4} \nu_i \nabla N_i(x^*) = 0, \label{eq: positive linear system 1}\\
    & \mu_i \geq 0, \quad \forall i \in I_2,\label{eq: positive linear system 2}\\
    & \text{either } \gamma_i>0, ~ \nu_i>0 \quad \text{or} \quad \nu_i \gamma_i=0, \quad \forall i \in \Bcal(x^*).\label{eq: positive linear system 3}
\end{align}
\end{subequations}
\Cref{eq: positive linear system 1} can also be written in more detail, with $\psi = (v,\varphi,\pi)$, as
\begin{subequations} \label{eq: linear system CQ for TV 0}
  \begin{align}
    & \nabla^2_{u u} \Dcal_0 (u^*)^\intercal v + K_x^\intercal \varphi^x + K_y^\intercal \varphi^y = 0 \label{eq: linear system CQ for TV 1}\\
    & K_x v - \pi^x =0 \label{eq: linear system CQ for TV 2}\\
    & K_y v - \pi^y =0 \label{eq: linear system CQ for TV 3}\\
    & \sum_{I_4 \subset \{\delta^*_i=Q(\alpha^*)_i\}}\nu_i \nabla_{\alpha}Q_i(\alpha^*) =0, \label{eq: linear system CQ for TV 3.5}\\
    & \Cos(\theta^*) \varphi^x + \Sin(\theta^*) \varphi^y = \chi_{I_3} \gamma \label{eq: linear system CQ for TV 4}\\
    & -\Cos(\theta^*) \pi^x - \Sin(\theta^*) \pi^y - \chi_{I_2}\mu+ \chi_{I_4} \nu= 0 \label{eq: linear system CQ for TV 5}\\
    & - R^* \Sin(\theta^*) \varphi^x + R^* \Cos(\theta^*) \varphi^y + D^* \Sin(\theta^*) \pi^x - D^* \Cos(\theta^*) \pi^y =0. \label{eq: linear system CQ for TV 6}
  \end{align}
\end{subequations}

If $(v,\varphi,\pi , \mu, \nu,\gamma)$ solves \eqref{eq: linear system CQ for TV 0}, then, from \Cref{eq: linear system CQ for TV 2} and \Cref{eq: linear system CQ for TV 3}, it follows that $\pi = \Kbb v$. Moreover, from equations \eqref{eq: linear system CQ for TV 5}-\eqref{eq: linear system CQ for TV 6}, and since $\mathcal T(x^*) = \emptyset$, it follows that
\begin{equation*}
  \begin{pmatrix}
    -\cos (\theta_i^*) & -\sin (\theta_i^*)\\
     \sin (\theta_i^*) &  -\cos (\theta_i^*)
  \end{pmatrix}
  \begin{pmatrix}
    \pi^x_i\\
    \pi^y_i
  \end{pmatrix}
  =
  \begin{pmatrix}
    0\\0
  \end{pmatrix}, \quad \forall i \in \Acal(x^*),
\end{equation*}
which implies that $\pi_i=(\Kbb v)_i=0, ~ \forall i \in \Acal(x^*).$
From \Cref{eq: linear system CQ for TV 4} we also get that
\begin{equation} \label{eq: proof CQ product lambda and q}
  \scalar{\varphi_i}{[\cos(\theta^*_i) ~\sin(\theta^*_i)]^\intercal} =
  \begin{cases}
    \gamma_i, & \forall i \in I_3 \subset \Acal(x^*) \cup \Bcal(x^*)\\
    0, & \text{else}.
  \end{cases}
\end{equation}

From \Cref{eq: linear system CQ for TV 4} and \Cref{eq: linear system CQ for TV 6} we get, for $i \in \Ical(x^*)$, that
\begin{equation*}
  \begin{pmatrix}
    \cos (\theta_i^*) & \sin (\theta_i^*)\\
     -r_i^* \sin (\theta_i^*) &  r_i^* \cos (\theta_i^*)
  \end{pmatrix}
  \begin{pmatrix}
    \varphi^x_i\\
    \varphi^y_i
  \end{pmatrix}
  =
  \begin{pmatrix}
    0\\-\delta_i^* \left(\sin(\theta_i^*) \pi^x_i -\cos(\theta_i^*)\pi^y_i \right)
  \end{pmatrix}.
\end{equation*}
Since $r_i^* >0,~\forall i \in \Ical(x^*)$, we obtain, similarly as in the derivation of \Cref{eq: RCPLD value of multiplier for grad h on I}, that
\begin{equation}
  \varphi_i = \frac{Q(\alpha^*)_i}{\|(\Kbb u^*)_i\|} \left( (\Kbb v)_i- \frac{\scalar{(\Kbb u^*)_i}{(\Kbb v)_i}}{\|(\Kbb u^*)_i\|^2} (\Kbb u^*)_i \right), \quad \forall i \in \Ical(x^*).
\end{equation}
Multiplying the latter with $(\Kbb v)_i$ we also get that $\scalar{\varphi_i}{(\Kbb v)_i} \geq 0$, for all $i \in \Ical(x^*)$.

Let us now introduce the following partition of the biactive set:
\begin{align*}
  & \Bcal_{0+}:= \{i \in \Bcal(x^*): \nu_i =0 \}, \\
  & \Bcal_{+0}:= \{i \in \Bcal(x^*): \gamma_i=0,~\nu_i \neq 0 \}, \\
  & \Bcal_{++}:= \{i \in \Bcal(x^*): \nu_i >0, \gamma_i>0 \}.
\end{align*}
On the index set $\Bcal_{0+}$, similarly as for $\Acal(x^*)$, it directly follows that
\begin{equation}
  (\Kbb v)_i =0, \quad \forall i \in \Bcal_{0+}.
\end{equation}
On the other hand, for an index $i \in \Bcal_{+0}$, we get from \Cref{eq: proof CQ product lambda and q} that $\scalar{\varphi_i}{[\cos(\theta^*_i) ~\sin(\theta^*_i)]^\intercal} = 0$.
Additionally, from \Cref{eq: linear system CQ for TV 5} and \Cref{eq: linear system CQ for TV 6} we also obtain that
\begin{equation*}
  \begin{pmatrix}
    \pi^x_i\\
    \pi^y_i
  \end{pmatrix}
  =
  \begin{pmatrix}
    \cos (\theta_i^*) & -\sin (\theta_i^*)\\
     \sin (\theta_i^*) &  \cos (\theta_i^*)
  \end{pmatrix}
  \begin{pmatrix}
    \nu_i\\0
  \end{pmatrix} = \nu_i [\cos(\theta^*_i) ~\sin(\theta^*_i)]^\intercal,
\end{equation*}
which implies that $(\Kbb v)_i = \nu_i [\cos(\theta^*_i) ~\sin(\theta^*_i)]^\intercal$, $|\nu_i| = \| (\Kbb v)_i \|$ and, consequently,
\begin{equation}
  \scalar{\varphi_i}{(\Kbb v)_i} = 0, \quad \forall i \in \Bcal_{+0}.
\end{equation}
On the index set $\Bcal_{++}$ we obtain from \Cref{eq: linear system CQ for TV 5} that
\begin{equation} \label{eq: proof CQ B++ ineq}
  0 < \nu_i = \scalar{(\Kbb v)_i}{[\cos(\theta^*_i) ~\sin(\theta^*_i)]^\intercal}.
\end{equation}
Additionally, from \Cref{eq: linear system CQ for TV 5} and \Cref{eq: linear system CQ for TV 6} we also get that
$(\Kbb v)_i = \nu_i [\cos(\theta^*_i) ~\sin(\theta^*_i)]^\intercal$ and, together with \Cref{eq: proof CQ B++ ineq}, it follows that $\nu_i = \|(\Kbb v)_i\| > 0.$
Jointly with \Cref{eq: linear system CQ for TV 4} we then obtain that
\begin{equation} \label{eq: proof CQ B++ ineq 2}
  0 < \gamma_i = \scalar{\varphi_i}{[\cos(\theta^*_i) ~\sin(\theta^*_i)]^\intercal}= \nu_i^{-1} \scalar{\varphi_i}{(\Kbb v)_i}.
\end{equation}
Multiplying \Cref{eq: linear system CQ for TV 1} with $v$, using the positive definiteness of $\nabla^2_{u u} \Dcal_0 (u^*)$, and replacing all obtained values of $\Kbb v$ and $\varphi$, it then follows that
$\sum_{\Bcal_{++}} \scalar{\varphi_i}{(\Kbb v)_i} \leq 0,$ in contradiction to \Cref{eq: proof CQ B++ ineq 2}. Therefore, $\Bcal_{++} = \emptyset$.

Consequently, we get the reduced system
\begin{subequations} \label{eq: reduced lin. system}
  \begin{align}
    & \nabla^2_{u u} \Dcal_0 (u^*)^\intercal v + \Kbb^\star \varphi = 0, \label{eq: reduced lin. system 1}\\
    & \varphi_i = \frac{Q(\alpha^*)_i}{|(\Kbb u^*)_i|} \left( (\Kbb v)_i- \frac{\scalar{(\Kbb u^*)_i}{(\Kbb v)_i}}{|(\Kbb u^*)_i|^2} (\Kbb u^*)_i \right), &&\forall i \in  \Ical(x^*),  \label{eq: reduced lin. system b}\\
    & (\Kbb v)_i =0, &&\forall i \in  \Acal(x^*) \cup \Bcal_{0+}, \\
    & \scalar{\varphi_i}{(\Kbb v)_i} =0, &&\forall i \in  \Bcal_{+0},
  \end{align}
\end{subequations}
which, thanks to the positive definiteness of $\nabla^2_{u u} \Dcal_0 (u^*)$ and the fact that $\scalar{\varphi_i}{(\Kbb v)_i} \geq 0, ~\forall i$, has $v=0$ as its unique solution.
From \eqref{eq: reduced lin. system b} we directly get that 
\begin{equation} \label{eq: CQ TV varphi=0}
  \varphi_i=0, \qquad \forall i \in \Ical(x^*),
\end{equation}
and, from \Cref{eq: reduced lin. system 1}, $\Kbb^\star \varphi=0$. Notice that the value of $\varphi_i$ on $\Acal(x^*) \cup \Bcal(x^*)$ is not uniquely determined as $\ker (\Kbb^\star) \neq \{ \mathbf{0}\}$. Additionally, on the biactive set we get, since $\nu_i = \|(\Kbb v)_i\|$, for all $i \in \Bcal_{+0}$, that $\nu_i = 0,$ for all $i \in \Bcal(x^*)$. \Cref{eq: linear system CQ for TV 5} then yields
\begin{equation*}
  \nu_i =
  \begin{cases}
    0 &\text{ if }i \notin \{\delta_i=0\},\\
    \mu_i &\text{ if }i \in \{\delta_i=0\}.
  \end{cases}
\end{equation*}
From \Cref{eq: linear system CQ for TV 3.5} it follows that $\sum_{\{\delta_i=Q(\alpha^*)_i=0\}} \nu_i \nabla_{\alpha}Q_i(\alpha^*) =0$ which, thanks to the hypothesis of the theorem and the required nonnegativity of $\mu$, implies that $\mu_i = \nu_i =0$, for all $i \in \{\delta^*_i=0\}.$
Consequently, from Equations \eqref{eq: linear system CQ for TV 1}, \eqref{eq: linear system CQ for TV 4} and \eqref{eq: CQ TV varphi=0}, the multiplier $\varphi$ has to solve the system
\[
  \begin{pmatrix}
    K_x^\intercal &K_y^\intercal\\
    \chi_{\Ical^*}+ \chi_{\Acal^* \cup \Bcal^*} \Cos(\theta^*) &\chi_{\Acal^* \cup \Bcal^*} \Sin(\theta^*)\\
    0 & \chi_{\Ical^*}
  \end{pmatrix}
  \begin{pmatrix}
    \varphi^x\\
    \varphi^y
  \end{pmatrix}
  =
  \begin{pmatrix}
    0\\
    \chi_{I_3} \gamma\\
    0
  \end{pmatrix},
\]
where $\Ical^* := \Ical(x^*)$, $\Acal^* := \Acal(x^*)$, $\Bcal^* := \Bcal(x^*)$. Thanks to the full-rank hypothesis on the matrix $\left[ (K_y)_{\Acal^* \cup \Bcal^*} - \Tan(\theta^*_{\Acal^* \cup \Bcal^*}) (K_x)_{\Acal^* \cup \Bcal^*} \right]^\intercal$, we obtain that for $\chi_{I_3} \gamma \neq 0$, there is a unique solution $\varphi \neq 0$. Hence, there exists a nonzero multiplier $(\psi,\mu,\gamma,\nu)$ solution of system \eqref{eq: linear system CQ for TV 0}.

Let us now consider, for $x \in V(x^*)$, the system
\begin{equation*}
  \sum_{i=1}^{d+4m} \tilde \psi_i \nabla h_i(x) + \sum_{I_2} \tilde \mu_i \nabla g_i(x) - \sum_{I_3} \tilde \gamma_i \nabla M_i(x) - \sum_{I_4} \tilde \nu_i \nabla N_i(x) = 0,
\end{equation*}
or, equivalently,
\begin{subequations}
  \begin{align}
    & \nabla^2_{u u} \Dcal_0 (u)^\intercal \tilde v + K_x^\intercal \tilde \varphi^x + K_y^\intercal \tilde \varphi^y = 0\\
    & K_x \tilde v - \tilde \pi^x =0\\
    & K_y \tilde v - \tilde \pi^y =0\\
    & \sum_{I_4}\tilde \nu_i \nabla_{\alpha}Q_i(\alpha) =0,\\
    & \Cos(\theta) \tilde \varphi^x + \Sin(\theta) \tilde \varphi^y = \chi_{I_3} \tilde \gamma \\
    & -\Cos(\theta) \tilde \pi^x - \Sin(\theta) \tilde \pi^y - \chi_{I_2}\tilde \mu+ \chi_{I_4} \tilde \nu= 0 \\
    & - R \Sin(\theta) \tilde \varphi^x + R \Cos(\theta) \tilde \varphi^y + D \Sin(\theta) \tilde \pi^x - D \Cos(\theta) \tilde \pi^y =0.
  \end{align}
\end{subequations}
 Taking $\tilde v=0$, $\tilde \pi=0$, $\tilde \nu=0$ and $\tilde \mu=0$, we arrive at the reduced system 
\[
  \begin{pmatrix}
    K_x^\intercal &K_y^\intercal\\
    \chi_{\Ical(x)}+ \chi_{\Acal(x) \cup \Bcal(x)} \Cos(\theta) &\chi_{\Acal(x) \cup \Bcal(x)} \Sin(\theta)\\
    0 & \chi_{\Ical(x)}
  \end{pmatrix}
  \begin{pmatrix}
    \varphi^x\\
    \varphi^y
  \end{pmatrix}
  =
  \begin{pmatrix}
    0\\
    \chi_{I_3} \tilde \gamma\\
    0
  \end{pmatrix},
\]
which, thanks again to the full-rank hypothesis on the matrix $\left[ (K_y)_{\Acal^* \cup \Bcal^*} - \Tan(\theta_{\Acal^* \cup \Bcal^*}) (K_x)_{\Acal^* \cup \Bcal^*} \right]^\intercal$, implies that for any $\chi_{I_3} \tilde \gamma \neq 0$, there is a unique solution $\tilde \varphi \neq 0$. 

Altogether, we have proved that MPCC-RCPLD is fulfilled, which implies MPCC-ACQ and the M-stationarity conditions.

If, in addition, \Cref{eq: s stationarity extra hypothesis} holds, then, from \Cref{eq: linear system CQ for TV 3.5}, with $I_4 = \Ical(x^*) \cup \Bcal(x^*)$, we get that $\nu_i=0$ for all $i \in \Bcal^*$. Proceeding in a similar manner as in the derivation of system \eqref{eq: reduced lin. system}, with $\Bcal^*= \Bcal_{0+}$, we get that the solution to system
\eqref{eq: linear system CQ for TV 1}-\eqref{eq: linear system CQ for TV 6}, with $I_2=\{ g_i(x^*)=0 \}$, $I_3 = \Acal(x^*) \cup \Bcal(x^*)$ and $I_4 = \Ical(x^*) \cup \Bcal(x^*)$, satisfies $v=0$, $\Kbb^\intercal \varphi=0$ and $\varphi_i =0,~\forall i \in \Ical^*$.
If also
$q|_{\Acal^* \cup \Bcal^*} \in \operatorname{range}
\begin{pmatrix}
  (K_x)_{\Acal^* \cup \Bcal^*}\\
  (K_y)_{\Acal^* \cup \Bcal^*}
\end{pmatrix}$,
then, thanks to \Cref{eq: linear system CQ for TV 4},
\[
  \begin{aligned}
    \gamma_{\Acal^* \cup \Bcal^*} &= \varphi^x_{\Acal^* \cup \Bcal^*} \circ \cos(\theta_{\Acal^* \cup \Bcal^*}^*)+ \varphi^y_{\Acal^* \cup \Bcal^*} \circ \sin(\theta^*_{\Acal^* \cup \Bcal^*}) = \delta_{\Acal^* \cup \Bcal^*}^{-1} \circ \scalar{q^*_{\Acal^* \cup \Bcal^*}}{\varphi_{\Acal^* \cup \Bcal^*}}\\
    &= \delta_{\Acal^* \cup \Bcal^*}^{-1} \circ \scalar{\varphi_{\Acal^* \cup \Bcal^*}}{\Kbb_{\Acal^* \cup \Bcal^*} \tilde w}  =  \delta_{\Acal^* \cup \Bcal^*}^{-1} \circ \scalar{(\Kbb_{\Acal^* \cup \Bcal^*})^\intercal ~\varphi_{\Acal^* \cup \Bcal^*}}{\tilde w} =0,
  \end{aligned}
\]
where $\tilde w$ is such that $\Kbb_{\Acal^* \cup \Bcal^*} \tilde w = q^*_{\Acal^* \cup \Bcal^*}$. Consequently, we get that any solution to equations \eqref{eq: positive linear system 1}-\eqref{eq: positive linear system 2} satisfies  $\gamma_i=\nu_i=0$, for all $i \in \Bcal(x^*)$,
and, therefore, partial MPCC-LICQ holds. By applying Theorem \ref{them: strong stationarity} the S-stationarity system follows.
\end{proof}

System \eqref{eq:strong_stationarity_TV} may also be written using solely the original variables, leading to stationary systems sharper than the ones obtained previously.
\begin{theorem} \label{thm:strong_stationarity_TV_original_variables}
  Let $x^*=(u^*,q^*,\alpha^*)$ be an optimal solution to \eqref{eq:bilevel_problem_kkt_reformulation} and, for all $i=1,\dots,m,$ let $r_i := \|(\Kbb u)_i\|$, $\delta_i := \|q_i\|$ and $\theta^*_i$ be the associated angles according to the reformulation \eqref{eq:bilevel_mpec_formulation}.
  Assume that the same hypotheses of \Cref{thm: strong stationarity MPCC} hold.
  Then there exist Lagrange multipliers
  $p \in \R^d, \varphi \in \R^{2m}$, and $\sigma, \nu \in \R^m$ such that the following \textbf{M-stationarity} system is satisfied:
  \begin{subequations} \label{eq:strong_stationarity_primal_variables}
  \begin{align}
      & \nabla_u \Dcal_0(u^*) + \Kbb^\star q^* = 0 \label{eq:strong_stationarity_primal_1}\\
      & \scalar{q^*_i}{(\Kbb u^*)_i} = Q(\alpha^*)_i  \|(\Kbb u^*)_i\|,\; && \forall i=1,\dots,m \label{eq:strong_stationarity_primal_2}\\
      & \|q^*_i\| \le Q(\alpha^*)_i,\; && \forall i=1,\dots,m \label{eq:strong_stationarity_primal_3}\\
       & \nabla_{uu}^2 \Dcal_0 (u^*)^\intercal p + \Kbb^\star \varphi = \nabla_u \Jcal (u^*), \label{eq:strong_stationarity_primal_4}\\
      & \nabla_\alpha Q(\alpha^*) ~ \nu =0 \label{eq:strong_stationarity_primal_5}\\
      & 0 \le \|q_i\| \perp \sigma_i \ge 0\; && \forall i=1,\dots,m \label{eq:strong_stationarity_primal_6.5}\\
      & \|(\Kbb u^*)_i\| ~\varphi_i = Q(\alpha^*)_i \left( I- \frac{(\Kbb u)_i (\Kbb u)_i^\intercal}{\|(\Kbb u^*)_i\|^2} \right) (\Kbb  p)_i, \; && \forall i \in \Ical(x^*) \label{eq:strong_stationarity_primal_7}\\
      & \left( \Kbb p \right)_i =0, \; && \forall i \in \Acal(x^*)  \label{eq:strong_stationarity_primal_8}\\[10pt]
      & \left.
        \begin{aligned}
          & (\Kbb p)_i =0, \quad \lor \\
          & (\Kbb p)_i \in \spa{(q^*_i)}, ~\scalar{\varphi_i}{q^*_i} = 0, \quad \lor \\
          & \scalar{q^*_i}{(\Kbb p)_i} = Q(\alpha^*)_i \|( \Kbb p )_i\|, ~\scalar{\varphi_i}{q^*_i} \ge 0.
       \end{aligned}
     \right\} && \forall i\in \Bcal(x^*) \label{eq:strong_stationarity_primal_9}\\[10pt]
      & \nu_i = \begin{dcases} \frac{1}{\|(\Kbb u)_i\|} \scalar{(\Kbb u)_i}{(\Kbb p)_i}+ \sigma_i, \; & \forall i \in \Ical(x^*)\\
      0, \; & \forall i \in \Acal(x^*)\\
      \frac{1}{Q(\alpha)_i}\scalar{ q_i}{(\Kbb p)_i}, \; & \forall i \in \Bcal(x^*)
    \end{dcases} \label{eq:strong_stationarity_primal_11}
  \end{align}
  \end{subequations}
  If, in addition, condition \eqref{eq: s stationarity extra hypothesis} holds and $q|_{\Acal^* \cup \Bcal^*} \in \operatorname{range} (\Kbb_{\Acal^* \cup \Bcal^*})$,
  then $x^*$ is \textbf{S-stationary} and \Cref{eq:strong_stationarity_primal_9} is replaced by
  \begin{equation} \label{eq:strong_stationarity_primal_12}
    \scalar{q^*_i}{(\Kbb p)_i} = Q(\alpha^*)_i \|( \Kbb p )_i\| \quad \text{ and } \quad \scalar{\varphi_i}{q^*_i} \ge 0, \qquad \forall i\in \Bcal(x^*).
  \end{equation}
\end{theorem}
\begin{proof}
  Since the optimal solution $x^*$ satisfies the optimality system \eqref{eq:strong_stationarity_TV}, we start from there. Equations \eqref{eq:strong_stationarity_primal_1}-\eqref{eq:strong_stationarity_primal_3} are just the constraints in \eqref{eq:bilevel_problem_kkt_reformulation}, while equations \eqref{eq:strong_stationarity_primal_4}-\eqref{eq:strong_stationarity_primal_5} follow immediatelly from \eqref{eq:strong_stationarity_TV_1} and \eqref{eq:strong_stationarity_TV_2}.

  To verify \eqref{eq:strong_stationarity_primal_8} let us note that $r^*_i =0$ and $\nu_i =0,$ for all $i \in \Acal(x^*).$ Consequently, equations \eqref{eq:strong_stationarity_TV_6} and \eqref{eq:strong_stationarity_TV_7} may be written as the linear system
    \begin{equation}
      \begin{pmatrix}
        \cos(\theta^*_i) &\sin(\theta^*_i)\\
        -\delta^*_i \sin(\theta^*_i) & \delta^*_i \cos(\theta^*_i)
      \end{pmatrix}
      \begin{pmatrix}
        \lambda_i^x\\ \lambda_i^y
      \end{pmatrix}
      =
      \begin{pmatrix}
        -\sigma_i \\ 0
      \end{pmatrix}, \qquad \forall i \in \Acal(x^*).
    \end{equation}
  Since by assumtion $\mathcal T(x^*)= \emptyset$, the system matrix becomes orthogonal (after dividing the second row by $\delta_i$). Consequently, since $\sigma_i=0$, for all $i \in \Acal(x^*)$, we get that $\lambda_i^x=0$ and $\lambda_i^y =0$, for all $i \in \Acal(x^*)$, which implies, thanks to \eqref{eq:strong_stationarity_TV_3}-\eqref{eq:strong_stationarity_TV_4}, that
  \begin{equation*}
    \left( \Kbb p \right)_i =0, \qquad \forall i \in \Acal(x^*).
  \end{equation*}

  For the characterization of the multiplier $\varphi$ on the inactive set, we consider the system resulting from equations \eqref{eq:strong_stationarity_TV_5} and \eqref{eq:strong_stationarity_TV_7}. Since $\delta_i^* = Q(\alpha^*)_i$, $r^*_i >0$ and $\gamma_i=0$ on $\Ical(x^*)$, we get
  \begin{equation}
    \begin{pmatrix}
      \cos(\theta^*_i) &\sin(\theta^*_i)\\
      -\sin(\theta^*_i) &\cos(\theta^*_i)
    \end{pmatrix}
    \begin{pmatrix}
      r_i \varphi_i^x\\ r_i \varphi_i^y
    \end{pmatrix}
    =
    \begin{pmatrix}
      0\\ - \delta^*_i \sin(\theta^*_i) \lambda_i^x+ \delta^*_i \cos (\theta^*_i) \lambda_i^y
    \end{pmatrix}, \qquad \forall i \in \Ical(x^*).
  \end{equation}
  Owing to the orthogonality of the system matrix, we obtain that
  \begin{align*}
    \begin{pmatrix}
      r^*_i \varphi_i^x\\ r_i \varphi_i^y
    \end{pmatrix}
    &=
    \begin{pmatrix}
      \cos(\theta^*_i) &-\sin(\theta^*_i)\\
      \sin(\theta^*_i) &\cos(\theta^*_i)
    \end{pmatrix}
    \begin{pmatrix}
      0\\ - \delta^*_i \sin(\theta^*_i) \lambda_i^x+ \delta^*_i \cos (\theta^*_i) \lambda_i^y
    \end{pmatrix}\\
    &= \delta^*_i
    \begin{pmatrix}
      \sin^2(\theta^*_i)\lambda_i^x- \sin(\theta^*_i) \cos (\theta^*_i)\lambda_i^y\\ -\sin(\theta^*_i)\cos (\theta^*_i) \lambda_i^x+ \cos^2 (\theta^*_i) \lambda_i^y
    \end{pmatrix}\\
    &= \delta^*_i
    \begin{pmatrix}
      \sin^2(\theta^*_i) &- \sin(\theta^*_i) \cos (\theta^*_i)\\
      - \sin(\theta^*_i) \cos (\theta^*_i) &\cos^2(\theta^*_i)
    \end{pmatrix}
    \begin{pmatrix}
      \lambda_i^x\\ \lambda_i^y
    \end{pmatrix}\\
    &= \delta^*_i
    \left[I -\begin{pmatrix}
      \cos^2(\theta^*_i) & \sin(\theta^*_i) \cos (\theta^*_i)\\
       \sin(\theta^*_i) \cos (\theta^*_i) &\sin^2(\theta^*_i)
    \end{pmatrix} \right]
    \begin{pmatrix}
          \lambda_i^x\\ \lambda_i^y
    \end{pmatrix}, && \forall i \in \Ical(x^*).
  \end{align*}
  Since, for all $i \in \Ical(x^*),$
  \begin{align*}
    (\Kbb u^*)_i (\Kbb u^*)_i^\intercal = (r^*_i)^2 \begin{pmatrix}
          \cos(\theta^*_i)\\ \sin(\theta^*_i)
    \end{pmatrix}
    \begin{pmatrix}
          \cos(\theta^*_i) &\sin(\theta^*_i)
    \end{pmatrix}
    = (r^*_i)^2 \begin{pmatrix}
      \cos^2(\theta^*_i) & \sin(\theta^*_i) \cos (\theta^*_i)\\
       \sin(\theta^*_i) \cos (\theta^*_i) &\sin^2(\theta^*_i)
    \end{pmatrix},
  \end{align*}
  we then get that
  \begin{equation*}
    r^*_i \varphi_i = \delta^*_i \left( I - \frac{(\Kbb u^*)_i (\Kbb u^*)_i^\intercal}{(r^*_i)^2} \right) \lambda_i, \qquad \forall i \in \Ical(x^*),
  \end{equation*}
  which can also be written as
  \begin{equation*}
    \|(\Kbb u^*)_i\| ~\varphi_j = Q(\alpha^*)_i \left( I- \frac{(\Kbb u^*)_i (\Kbb u^*)_i^\intercal}{  \|(\Kbb u^*)_i\|^2} \right) (\Kbb p)_i, \qquad \forall i \in \Ical(x^*).
  \end{equation*}

  In a similar manner, on the biactive set $\Bcal(x^*)$, $r^*_i =0$ and $\delta^*_i=Q(\alpha^*)_i$ and, from equations \eqref{eq:strong_stationarity_TV_6} and \eqref{eq:strong_stationarity_TV_7}, we obtain the system
  \begin{equation}
    \begin{pmatrix}
      \cos(\theta^*_i) &\sin(\theta^*_i)\\
      -\delta^*_i \sin(\theta^*_i) & \delta^*_i \cos(\theta^*_i)
    \end{pmatrix}
    \begin{pmatrix}
      \lambda_i^x\\ \lambda_i^y
    \end{pmatrix}
    =
    \begin{pmatrix}
      \nu_i- \sigma_i\\ 0
    \end{pmatrix}, \qquad \forall i \in \Bcal(x^*).
  \end{equation}
  Since for any orthogonal matrix $U$, $\|U x\| =\|x\|$, and using that $\mathcal T(x^*)=\emptyset$, it then follows that $\sigma_i=0, ~\forall i \in \Bcal(x^*),$ and
  \begin{equation}\label{eq:proof stationarity primal biactive multiplier nu}
    |\nu_i| = \|\lambda_i \|, \qquad \forall i \in \Bcal(x^*).
  \end{equation}
  Moreover, from \Cref{eq:strong_stationarity_TV_7} it follows that $\scalar{\lambda_i}{[-\sin(\theta_i^*) ~\cos(\theta_i^*)]}=0$, which implies, using the representation \eqref{eq:strong_stationarity_TV_3}-\eqref{eq:strong_stationarity_TV_4}, that
    \begin{equation}
      (\Kbb p)_i \in \spa(q^*_i).
    \end{equation}
  We next analyze the cases in $\Bcal(x^*)$:
  \begin{itemize}
    \item If $\nu_i=0$, then from \Cref{eq:proof stationarity primal biactive multiplier nu} and equations \eqref{eq:strong_stationarity_TV_3}-\eqref{eq:strong_stationarity_TV_4} it follows that
    $(\Kbb p)_i=0.$
    \item If $\gamma_i =0$, then from \Cref{eq:strong_stationarity_TV_5} we get that $\frac{1}{Q(\alpha^*)_i} \scalar{q^*_i}{\varphi_i} =0.$
    \item If $\nu_i \geq 0, ~\gamma_i \geq 0$, then we get from \Cref{eq:strong_stationarity_TV_5} that
    \begin{equation*}
      \scalar{\varphi_i}{q^*_i} = Q(\alpha^*)_i \cos(\theta^*_i) \varphi_i^x + Q(\alpha^*)_i \sin(\theta^*_i) \varphi_i^y = Q(\alpha^*)_i \gamma_i \geq 0.
    \end{equation*}
    Moreover, from \Cref{eq:strong_stationarity_TV_6} and \Cref{eq:proof stationarity primal biactive multiplier nu}, we then get that
    \begin{equation*}
      \scalar{q^*_i}{\left( \Kbb p \right)_i} = Q(\alpha^*)_i \|\left( \Kbb p \right)_i\|, \qquad \forall i \in \Bcal(x^*).
    \end{equation*}
  \end{itemize}
  Combining all cases, conditions \eqref{eq:strong_stationarity_primal_9} are obtained.

  For the characterization of $\nu$, let us first notice that, thanks to \eqref{eq:strong_stationarity_TV_10}, $\nu_i =0$, for all $i \in \Acal(x^*)$.
  On the inactive set, on the other hand, we know that $r^*_i = \|(\Kbb u^*)_i\|>0$. Consequently, 
  by multiplying \eqref{eq:strong_stationarity_TV_6} with $r^*_i$, we obtain
  \begin{equation*}
    r^*_i \nu_i = r^*_i \left( \cos (\theta^*_i) \lambda_i^x + \sin(\theta^*_i) \lambda_i^y \right) + r^*_i \sigma_i = \scalar{(\Kbb u^*)_i}{(\Kbb p)_i} + r^*_i \sigma_i, \qquad \forall i \in \Ical(x^*),
  \end{equation*}
  which implies that
  \begin{equation*}
    \nu_i = \frac{1}{\|(\Kbb u^*)_i\|} \scalar{(\Kbb u^*)_i}{(\Kbb p)_i} + \sigma_i, \qquad \forall i \in \Ical(x^*).
  \end{equation*}
  On $\Bcal(x^*)$, since by hypothesis $\Tcal(x^*) = \emptyset$, we know that $\sigma_i =0$ and $\delta^*_i = Q(\alpha^*)_i >0$. Consequently, from \eqref{eq:strong_stationarity_TV_6},
  \begin{equation*}
    \nu_i = \cos (\theta^*_i) \lambda_i^x + \sin(\theta^*_i) \lambda_i^y = \frac{1}{Q(\alpha^*)_i}\scalar{ q^*_i}{(\Kbb p)_i},\qquad \forall i \in \Bcal(x^*).
  \end{equation*}

The S-stationarity condition follows in a straightforward manner from the previous argumentation on the biactive set and \Cref{eq:strong_stationarity_TV_Strong}.
\end{proof}

\begin{remark}
  From the optimality system \eqref{eq:strong_stationarity_primal_variables} and the change of variables, system \eqref{eq:strong_stationarity_TV} may also be obtained by utilizing the orthogonal structure of the transformation matrix as in the proof of \Cref{thm:strong_stationarity_TV_original_variables}.
\end{remark}

\begin{remark}
  Also from \eqref{eq:strong_stationarity_primal_variables} it becomes clear that the multiplier $\sigma$ only plays a role on the index set $\{i \in \Ical(x^*): Q(\alpha^*)_i=0\}$. If this set turns out to be empty, the multiplier can be dismissed, along with the complementarity condition \eqref{eq:strong_stationarity_primal_6.5}.
\end{remark}

For MPCC problems it is well-known that the strong stationarity system \eqref{eq:strong_stationarity_TV} corresponds to the Karush-Kuhn-Tucker system of a locally around $x^*$ relaxed problem (see, e.g., \cite{scheel2000mathematical}), which in the case of problem \eqref{eq:bilevel_mpec_formulationTV} takes the following form:
\begin{mini*}
    {\substack{u,q,\alpha,r,\delta,\theta}}{\Jcal (u)}{}{}
    \addConstraint{\nabla_u \Dcal_0(u;f))+ \Kbb^\intercal q}{= 0}{}
    \addConstraint{(\Kbb u)_i}{= r_i [\cos(\theta_i),\sin(\theta_i)]^\intercal,}{\quad \forall i=1,\dots,m}
    \addConstraint{q_i}{= \delta_i [\cos(\theta_i),\sin(\theta_i)]^\intercal,}{\quad \forall i=1,\dots,m}{}
    \addConstraint{\delta_i}{\ge 0,}{\quad \forall i=1\dots,m}
    \addConstraint{r_i =0,\quad}{(Q(\alpha)_i-\delta_i) \ge 0,}{\quad \forall i \in \Acal(x^*)}{}
    \addConstraint{r_i \ge 0,\quad}{(Q(\alpha)_i-\delta_i) = 0,}{\quad \forall i \in \Ical(x^*)}{}
    \addConstraint{r_i \ge 0,\quad}{(Q(\alpha)_i-\delta_i) \ge 0,}{\quad \forall i \in \Bcal(x^*).}{}
\end{mini*}

In the case of problem \eqref{eq:bilevel_problem_kkt_reformulation}, using solely the original variables, a similar result, locally around $x^*$, can be verified for the relaxed problem:
  \begin{mini!}
     {\substack{u,q,\alpha}}{\Jcal(u)}{\label{eq:relaxed bilevel_mpec_formulationTV}}{}
     \addConstraint{\nabla_u \Dcal_0(u;f) + \Kbb^\intercal q}{= 0}
     \addConstraint{q_i}{= Q(\alpha)_i \frac{(\Kbb u)_i}{\|(\Kbb u)_i\|},}{\quad \forall i \in \Ical(x^*)}
     \addConstraint{(\Kbb u)_i}{= 0,}{\quad \forall i \in \Acal(x^*)}
     \addConstraint{\scalar{q_i}{(\Kbb u)_i}}{\geq 0,}{\quad \forall i \in \Bcal(x^*)}
     \addConstraint{\|q_i\|}{\le Q(\alpha)_i,}{\quad \forall i \in (\Acal(x^*) \cup \Bcal(x^*)) \cap \{ q_i^* \neq 0 \}}{\label{eq:relaxed bilevel_mpec_formulationTV_const_q}}{}
     \addConstraint{Q(\alpha)_i}{\geq 0,}{\quad \forall i \in \Ical(x^*).}
  \end{mini!}
Indeed, if $ x= ( u, q, \alpha)$ is an optimal solution to \eqref{eq:relaxed bilevel_mpec_formulationTV}, locally around $x^*$, it follows that $ q_i \neq 0$ if
$q^*_i \neq 0$ and $(\Kbb u)_i \neq 0$ if $(\Kbb u^*)_i  \neq 0$. Let us now assume that $\mathcal T(x^*)= \emptyset$ and introduce the Lagrangian
\begin{multline*}
  \Lcal(x,p,\phi,\gamma,\mu,\beta):= \Jcal(u)- \scalar{p}{\nabla_u \Dcal_0(u)+ \Kbb^\intercal q} + \sum_{\Ical(x^*)} \scalar{\phi_i}{q_i-Q(\alpha)_i \frac{(\Kbb u)_i}{\| (\Kbb u)_i \|}}\\
  - \sum_{\Acal(x^*)} \scalar{\mu_i}{(\Kbb u)_i} - \sum_{\Bcal(x^*)} \gamma_i \scalar{q_i}{(\Kbb u)_i}  + \sum_{\substack{\Acal(x^*)\cup \Bcal(x^*)}} \beta_i \left( \|q_i\|- Q(\alpha)_i \right)- \sum_{\Ical(x^*)} \sigma_i Q(\alpha)_i.
\end{multline*}

Taking the derivative with respect to $u$, in a direction $\delta_u$, yields
\begin{multline*}
  \nabla_u \Lcal(x,p,\phi,\gamma,\mu,\beta)[\delta_u] = \scalar{\nabla_u \Jcal(u)}{\delta_u} - \scalar{\nabla^2_{uu} \Dcal_0(u)^\intercal p}{\delta_u}\\ -
  \sum_{\Ical(x^*)} \scalar{\frac{Q(\alpha)_i}{\|(\Kbb u)_i\|} \left( I - \frac{(\Kbb u)_i (\Kbb u)_i^\intercal}{\|(\Kbb u)_i\|^2} \right) \phi_i}{(\Kbb \delta_u)_i}
  - \sum_{\Bcal(x^*)} \gamma_i \scalar{q_i}{(\Kbb \delta_u)_i} - \sum_{\Acal(x^*)}  \scalar{\mu_i}{(\Kbb \delta_u)_i}=0.
\end{multline*}
Setting
\begin{equation*}
  \varphi_i =
  \begin{cases}
      \frac{Q(\alpha)_i}{\|(\Kbb u)_i\|} \left( I - \frac{(\Kbb u)_i (\Kbb u)_i^\intercal}{\|(\Kbb u)_i\|^2} \right) \phi_i, & \forall i \in \Ical(x^*),\\
      \mu_i, & \forall i \in \Acal(x^*),\\
      \gamma_i q_i, & \forall i \in \Bcal(x^*),
  \end{cases}
\end{equation*}
we then obtain that
\begin{equation*}
  \nabla^2_{uu} \Dcal_0(u)^\intercal p + \Kbb^\intercal \varphi = \nabla_u \Jcal(u).
\end{equation*}

Taking the derivative of the Lagrangian with respect to $q$, in direction $\delta_q$, yields
\begin{multline*}
  \nabla_q \Lcal(x,p,\phi,\gamma,\mu,\beta)[\delta_q] = -\scalar{\Kbb p}{(\delta_q)_i} + \sum_{\Ical(x^*)} \scalar{\phi_i}{(\delta_q)_i}
  - \sum_{\Bcal(x^*)} \gamma_i \scalar{(\Kbb u)_i}{(\delta_q)_i}\\ + \sum_{\Acal(x^*) \cup \Bcal(x^*)}~ \beta_i \scalar{\frac{q_i}{\|q_i\|}}{(\delta_q)_i} =0,
\end{multline*}
which implies that
\begin{equation*}
  (\Kbb p)_i =
  \begin{cases}
      \phi_i, & \forall i \in \Ical(x^*),\\
      \beta_i \frac{q_i}{\|q_i\|}, & \forall i \in \Acal(x^*),\\
      \beta_i \frac{q_i}{\|q_i\|} - \gamma_i (\Kbb u)_i, & \forall i \in \Bcal(x^*),\\
      0, &\text{elsewhere}.
  \end{cases}
\end{equation*}

On the set $\Acal(x^*)$, since $0 < \|q_i^*\| < Q(\alpha)_i$ and $q_i$ is in a neighborhood of $q_i^*$, we get that $0 < \|q_i\| < Q(\alpha)_i$. Thanks to the complementarity with respect to the KKT multiplier, $\beta_i=0$ on this set, which further implies that
\begin{equation*}
  (\Kbb p)_i = 0, \qquad \forall i \in \Acal(x^*).
\end{equation*}

On the set $\Bcal(x^*)$,
\begin{itemize}
  \item if $(\Kbb u)_i = 0$, it follows that $(\Kbb p)_i$ and $q_i$ are collinear, which implies that $\scalar{q_i}{(\Kbb p)_i}= \|q_i\| \|(\Kbb p)_i\|$. If $\|q_i\| < Q(\alpha)_i$, then by complementarity $\beta_i=0$ and, therefore, $(\Kbb p)_i = 0$. Otherwise, $\|q_i\| = Q(\alpha)_i$.
  \item if $(\Kbb u)_i \neq 0$, then $\scalar{q_i}{(\Kbb p)_i} = \beta_i
  \|q_i\| - \gamma_i \scalar{q_i}{(\Kbb u)_i}$. Thanks to the complementarity with respect to the multiplier $\gamma_i$, it then follows that $(\Kbb p)_i=\beta_i \frac{q_i}{\|q_i\|}$. Due to the collinearity, it also follows that $\scalar{q_i}{(\Kbb p)_i}= \|q_i\| \|(\Kbb p)_i\|$. If $\|q_i\| < Q(\alpha)$,
  then $\beta_i=0$ and $(\Kbb p)_i=0$. Otherwise, $\|q_i\|=Q(\alpha)_i$. Together we get that $\scalar{q_i}{(\Kbb p)_i}= Q(\alpha)_i \|(\Kbb p)_i\|$.
\end{itemize}
Consequently, we obtain
\begin{equation} \label{eq: OS_relaxed_scalar_1}
  \scalar{q_i}{(\Kbb p)_i}= Q(\alpha)_i \|(\Kbb p)_i\|, \qquad \forall i \in \Bcal(x^*) \cap \{q_i^* \neq 0\}.
\end{equation}
In addition, $\scalar{\varphi_i}{q_i} = \gamma_i \|q_i\|^2 \geq 0, ~ \forall i \in \Bcal(x^*).$

Taking the derivative of the Lagrangian with respect to $\alpha$ yields
\begin{equation*}
  \nabla_\alpha \Lcal(x,p,\phi,\gamma,\mu,\beta) = -\chi_{\Ical(x^*)} \nabla_\alpha Q(\alpha)_i \left( \scalar{\phi_i}{\frac{(\Kbb u)_i}{\|(\Kbb u)_i\|}}+ \sigma_i \right)- \chi_{\Acal(x^*) \cup \Bcal(x^*) \backslash q_i^* =0} \beta_i \nabla_\alpha Q(\alpha)_i =0.
\end{equation*}
On the set $\Acal(x^*) \cap \{q_i^* \neq 0\}$, it follows by complementarity that $\beta_i=0$. On $\Bcal(x^*) \cap \{q_i^* \neq 0\}$ we get, with similar arguments as above, that $(\Kbb p)_i=\beta_i \frac{q_i}{\|q_i\|}$ and $\scalar{q_i}{(\Kbb p)_i}= Q(\alpha)_i \|(\Kbb p)_i\|$, which implies that $\beta_i = \frac{1}{Q(\alpha)_i}\scalar{q_i}{(\Kbb p)_i}.$ Setting
\begin{equation*}
  \nu_i =
  \begin{cases}
      \frac{1}{\|(\Kbb u)_i\|} \scalar{(\Kbb u)_i}{(\Kbb p)_i}+ \sigma_i, & \forall i \in \Ical(x^*)\\
      0, & \forall i \in \Acal(x^*)\\
      \frac{1}{Q(\alpha)_i}\scalar{q_i}{(\Kbb p)_i}, & \forall i \in \Bcal(x^*),
  \end{cases}
\end{equation*}
we then obtain that $\nabla_\alpha Q(\alpha) \nu =0$,
and the equivalence is verified.

\subsubsection{Case with scalar parameter}
In the case of a scalar weight, it is known, under weak data conditions \cite{reyes2015a}, that the optimal parameter $\alpha^* >0$. Since in this case $Q(\alpha)= \alpha \mathbf{1}$, it then follows that $\sigma_i =0,$ for all $i \in \Ical(x^*)$. Consequently, from \Cref{eq:strong_stationarity_primal_9} and \Cref{eq:strong_stationarity_primal_11},
\begin{equation*}
  \nu_i = \frac{1}{\alpha^*} \scalar{q^*_i}{(\Kbb p)_i}, \qquad \forall i \in \Ical(x^*) \cup \Bcal(x^*).
\end{equation*}
The strong stationary system simplifies to
\begin{subequations} \label{eq:strong_stationarity_scalar_case}
\begin{align}
    & \nabla_u \Dcal_0(u^*) + \Kbb^\star q^* = 0\\
    & \scalar{q^*_i}{(\Kbb u^*)_i} = \alpha^*  \|(\Kbb u^*)_i\|,\; && \forall i=1,\dots,n\\
    & \|q^*_i\| \le \alpha^*,\; && \forall i=1,\dots,n\\
    & \nabla_{uu}^2 \Dcal_0 (u^*)^\intercal p + \Kbb^\star \varphi = \nabla_u \Jcal (u^*),\\
    & \sum_{i \in \Ical(x^*) \cup \Bcal(x^*)} \scalar{q^*_i}{(\Kbb p)_i} =0,\\
    & \|(\Kbb u^*)_i\| ~\varphi_i = \alpha^* \left( I- \frac{(\Kbb u^*)_i (\Kbb u^*)_i^\intercal}{\|(\Kbb u^*)_i\|^2} \right) (\Kbb  p)_i, \; && \forall i \in \Ical(x^*),\\
    & \left( \Kbb p \right)_i =0, \; && \forall i \in \Acal(x^*),\\
    & \scalar{q^*_i}{(\Kbb p)_i} = \alpha^* \|( \Kbb p )_i\|, \; && \forall i \in \Bcal(x^*),\\
    & \scalar{\varphi_i}{q^*_i} \ge 0,\; && \forall i\in \Bcal(x^*).
\end{align}
\end{subequations}

\subsubsection{Case with scale-dependent parameter}
In the case of a spatial-dependent parameter $\alpha \in \R^n$, the parameter function $Q$ is just the identity matrix. Consequently, \Cref{eq:strong_stationarity_primal_5} yields $\nu_i =0, \, \forall i.$
From \Cref{eq:strong_stationarity_primal_11},
\begin{equation*}
  \frac{1}{\alpha^*_i}\scalar{q^*_i}{(\Kbb p)_i}, \quad \forall i \in \Ical(x^*) \cup \Bcal(x^*).
\end{equation*}
Moreover, combining \Cref{eq:strong_stationarity_primal_7} and \Cref{eq:strong_stationarity_primal_11}, we get that
\begin{equation*}
  \|(\Kbb u^*)_i\| ~\varphi_i = \alpha^*_i (\Kbb  p)_i, \quad \forall i \in \Ical(x^*).
\end{equation*}
Combining the latter with \Cref{eq:strong_stationarity_primal_12} we also obtain that
\begin{equation*}
  \scalar{\varphi_i}{q_i^*} =\alpha_i^* \|\varphi_i\|, \quad \forall i \in \Ical(x^*).
\end{equation*}
Consequently, the S-stationarity system takes the following form:
\begin{subequations} \label{eq:strong_stationarity_scale_dependent}
\begin{align}
    & \nabla_u \Dcal_0(u^*) + \Kbb^\intercal q^* = 0\\
    & \scalar{q^*_i}{(\Kbb u^*)_i} = \alpha^*_i  \|(\Kbb u^*)_i\|,\; && \forall i=1,\dots,n \\
    &\|q^*_i\| \le \alpha^*_i,\; && \forall i=1,\dots,n \\
    & \nabla_{uu}^2 \Dcal_0 (u^*)^\intercal p + \Kbb^\intercal \varphi = \nabla_u \Jcal (u^*), \\
    & \|(\Kbb u^*)_i\| ~\varphi_i = \alpha^*_i (\Kbb  p)_i, \; && \forall i \in \Ical(x^*)\\
    & \left( \Kbb p \right)_i =0, \; && \forall i \in \Acal(x^*) \cup \Bcal(x^*)\\
    & \scalar{\varphi_i}{q_i^*} =\alpha_i^* \|\varphi_i\|, \; && \forall i \in \Ical(x^*)\\
    & \scalar{\varphi_i}{q_i^*} \ge 0,\;&& \forall i \in \Bcal(x^*).
\end{align}
\end{subequations}
For Gaussian noise, i.e., $\Dcal_0(u)= \|u-f\|^2$ and quadratic loss function, if certain conditions on the total variation of the noisy and ground truth images are fulfilled (see \cite{reyes2015a} for details), we also know that $\alpha_i >0$, for all $i$.


\subsection{Remark on the Infinite-Dimensional Case}\label{sec:infinite}
Let us now consider the infinite-dimensional total variation denoising model given by
\begin{mini!}
  {u}{\frac{\mu}{2} \int_\Omega |\nabla u|^2~dx + \frac{1}{2} \int_\Omega |u-f|^2 ~dx + \alpha \int_\Omega |\nabla u| ~dx,}{\label{eq: inf dim TV lower level}}{}
\end{mini!}
where $f \in L^{\infty}(\Omega)$, $\Omega \subset \R^2$ is a convex domain and $0 < \mu \ll 1$ is an artificial diffusion parameter. In this case the unique solution to the denoising problem belongs to the Sobolev space $H^1(\Omega)$. Moreover, the solution is characterized by the existence of a dual multiplier $q \in L^2(\Omega;\R^2)$, such that the following extremality conditions are satisfied:
\begin{subequations}\label{eq:inf dim primal_dual_stationarity_lower_level}
    \begin{align}
        - \mu \Delta u + u + \operatorname{div} q &= f,\\
        \scalar{q(x)}{\nabla u(x)}&=  \alpha \|\nabla u(x)\|, && \text{ a.e. in } \Omega\\
        \|q(x)\|&\le \alpha, && \text{ a.e. in } \Omega
    \end{align}
\end{subequations}

Although, intuitively, a similar change of variables as in problem \eqref{eq:bilevel_mpec_formulation} may be used in this case, a careful treatment must be carried out due to its infinite-dimensional character. Thanks to the convexity of $\Omega$, extra regularity results hold for this problem. In particular we get that $q \in [L^{\infty}(\Omega)]^2$ and $\nabla u \in [L^{\infty}(\Omega)]^2$, which allows us to define
$r(x) := \|\nabla u(x)\| \in L^{\infty}(\Omega)$ and $\delta(x) := \|q(x)\| \in L^{\infty}(\Omega)$. By using the collinearity condition, we may introduce 
\begin{equation*}
  \theta(x):=
  \begin{cases}
    \arccos \left( \partial_x u(x)/r(x) \right) &\text{if } \partial_y u(x) \geq 0, ~r(x) \neq 0,\\
    -\arccos \left( \partial_x u(x)/r(x) \right) &\text{if } \partial_y u(x) < 0, ~r(x) \neq 0,\\
    \arccos \left( q^x(x)/\delta(x) \right) &\text{if } q^y(x) \geq 0, ~\delta(x) \neq 0, ~r(x)=0,\\
    -\arccos \left( q^x(x)/\delta(x) \right) &\text{if } q^y(x) < 0, ~\delta(x) \neq 0, ~r(x)=0,\\
    \text{undefined} &\text{if } \delta(x) = 0, ~r(x)=0.
  \end{cases}
\end{equation*}
Thanks to the boundedness of both $r(x)$ and $\delta(x)$ and the continuity of $\arccos(\cdot)$, it follows that $\theta(x)$ is also in $L^\infty(\Omega)$ and
\begin{align*}
   \nabla u(x) & = r(x) [\cos(\theta (x)),\sin(\theta (x))]^\intercal, && \text{ a.e. in } \Omega\\
   q(x) & = \delta (x) [\cos(\theta(x)),\sin(\theta(x))]^\intercal,&& \text{ a.e. in } \Omega.
\end{align*}

Similarly as for the finite-dimensional case, the following complementarity conditions are then fulfilled by the auxiliary variables:
\begin{align*}
    &  \delta(x) \ge 0, && \text{ a.e. in } \Omega,\\
    &  0\le r(x) \perp (\alpha -\delta(x)) \ge 0, && \text{ a.e. in } \Omega.
\end{align*}
With this reformulation it is possible to write the bilevel problem as follows:
\begin{mini}[3]
    {\substack{u,q,\lambda,r,\delta, \theta}}{\Jcal (u)}{\label{eq:bilevel_mpec_formulation_ininite}}{}
    \addConstraint{- \mu \Delta u + u+ \operatorname{div} q}{= f}
    \addConstraint{\nabla u(x)}{= r(x) [\cos(\theta (x)),\sin(\theta (x))]^\intercal,}{\;\quad \text{ a.e. in } \Omega}
    \addConstraint{q(x)}{= \delta (x) [\cos(\theta(x)),\sin(\theta(x))]^\intercal,}{\;\quad \text{ a.e. in } \Omega}
    \addConstraint{\delta(x)}{\ge 0,}{\;\quad \text{ a.e. in } \Omega}
    \addConstraint{0\le r(x)}{\perp (\alpha -\delta (x)) \ge 0,}{{\;\quad \text{ a.e. in } \Omega}.}
\end{mini}
The constraints in \eqref{eq:bilevel_mpec_formulation_ininite} involve pointwise inequalities on the Euclidean norm of the gradient of $u$ and also on the Euclidean norm of the dual multiplier $q$, which are, in addition, in complementarity to each other. Although there are several contributions on PDE-constrained optimization problems involving state constraints, pointwise constraints on the gradient of the state are particularly difficult to handle; even more so if the state variables are also in pointwise complementarity. Problem \eqref{eq:bilevel_mpec_formulation_ininite} is indeed a very challenging one that requires a detailed treatment, which is beyond the scope of this paper.

\section{Second-Order Optimality Conditions}\label{sec:second}
Let us now turn to second-order optimality conditions and let us summarize first some known results from the literature (see \cite{guo2013second} for further details). We start by defining the Lagrangian for the MPCC problem \eqref{eq:MPCC} as follows:
\begin{equation}
  \mathcal L_{MPCC}(x,\lambda, \mu, \gamma, \nu): = \ell(x)+ h(x)
^\intercal \lambda + g(x)^\intercal \mu -M(x)^\intercal \gamma - N(x)^\intercal \nu.
\end{equation}


\begin{definition}
  Let $x^* \in X$ be feasible for \eqref{eq:MPCC}. The M-multiplier strong second-order sufficient condition (M-SSOSC) holds at $x^* \in X$ iff, for every M-multiplier $(\lambda, \mu, \gamma, \nu)$,
  \begin{equation}\label{eq: M-SSOSC}
    d^\intercal \nabla_{xx}^2 \mathcal L_{MPCC}(x^*,\lambda, \mu, \gamma, \nu) d >0, \quad \forall d \in \mathcal C_{MPCC}(x^*) \backslash \{0\},
  \end{equation}
  where $\mathcal C_{MPCC}(x^*):=L_X^{MPCC}(x^*) \cap \{d: \nabla \ell(x^*)^\intercal d \leq 0 \}$.
\end{definition}

\begin{theorem}[Guo, Lin, Ye \cite{guo2013second}]
  Let $x^*$ be an M-stationary point of the general MPCC \eqref{eq:MPCC}. Suppose that both the MPEC-RCPLD and M-SSOSC hold at $x^*$. Then there exists a constant $\delta >0$ such that, if $x \in B_\delta (x^*) \cap X$ and there is an M-multiplier for $x$, there must hold $x=x^*$. Moreover, if $x^*$ is an S-stationary point of \eqref{eq:MPCC}, then there exists a neighborhood $V$ of $x^*$ containing no other S-stationary point. 
\end{theorem}

Turning back to problem \eqref{eq:bilevel_mpec_formulationTV}, let us introduce the corresponding MPCC-Lagrangian:
\begin{multline}
  \mathcal L_{MPCC}(x,\lambda, \mu, \gamma, \nu)= \Jcal (u) + \scalar{p}{-\nabla_u \Dcal_0 (u)- \Kbb^\intercal q}\\ + \scalar{\varphi^x}{-K_x u + r \circ \cos(\theta)} + \scalar{\varphi^y}{-K_y u + r \circ \sin(\theta)}\\
  + \scalar{\lambda^x}{q^x - \delta \circ \cos(\theta)} + \scalar{\lambda^y}{q^y - \delta \circ \sin(\theta)} - \sigma^\intercal \delta -\gamma^\intercal r - \nu^\intercal (Q(\alpha)- \delta).
\end{multline}
Since M- and S-stationarity conditions hold under suitable assumptions (see \Cref{thm: strong stationarity MPCC}), second-order sufficient conditions may be verified under strong convexity of the Lagrangian for critical directions. In the next theorem we study under which conditions such sufficiency holds.
\begin{theorem} \label{thm: SSC}
     Let $x^*$ be an M-stationary point of \eqref{eq:bilevel_mpec_formulationTV} such that MPEC-RCPLD holds. If, for every M-multiplier and non-vanishing critical direction $d \in L_X^{MPCC}(x^*) \cap \{d: \nabla \Jcal(u^*)^\intercal d_u \leq 0 \}$,
     \begin{multline*}
       \langle \nabla_{u u}^2 \Jcal (u^*) d_u, d_u \rangle - \langle [\nabla_u (\nabla_{u u}^2 \Dcal (u^*)^\intercal p)]^\intercal d_u, d_u \rangle\\
       -  \langle [\nabla_\alpha (\nabla_\alpha Q(\alpha^*))^\intercal \nu)]^\intercal d_\alpha, d_\alpha \rangle
       + 2 \langle - \varphi^x \circ K_y d_u + \varphi^y \circ K_x d_u, d_\theta \rangle_{\Ical^* \cup \Bcal^*}
       \\+ 2 \langle (\lambda^x \circ \sin (\theta^*)- \lambda^y \circ \cos (\theta^*)) \circ \nabla_\alpha Q(\alpha^*)_i^\intercal d_\alpha, d_\theta \rangle_{\Ical^*} + \langle \delta \circ \nu \circ d_\theta, d_\theta \rangle_{\Ical^* \cup \Bcal^*} >0,
     \end{multline*}
     then $x^*$ is the locally unique M-stationary point of \eqref{eq:bilevel_mpec_formulationTV}.
\end{theorem}
\begin{proof}
  To prove the result, we need to verify that for every set of M-multipliers $(p,\varphi, \lambda, \rho, \sigma, \gamma, \nu)$ and every non-vanishing critical direction $d \in L_X^{MPCC}(x^*) \cap \{d: \nabla \Jcal(u^*)^\intercal d_u \leq 0 \}$,
  \begin{equation*}
    d^\intercal \nabla_{xx}^2 \Lcal (x^*, p,\varphi, \lambda, \rho, \sigma, \gamma, \nu) d >0.
  \end{equation*}
  The critical directions satisfy the following equations for the equality constraints
  \begin{align}
    &-\nabla_{uu}^2 \Dcal (u^*)^\intercal d_u - K_x^\intercal d_{q^x} - K_y^\intercal d_{q^y}=0,\\
    & - K_x d_u + \cos (\theta^*) \circ d_r - r^* \circ \sin(\theta^*) \circ d_\theta =0, \label{eq: critical directions 2}\\
    & - K_y d_u + \sin (\theta^*) \circ d_r + r^* \circ \cos(\theta^*) \circ d_\theta =0, \label{eq: critical directions 3}\\
    & d_{q^x} - \cos (\theta^*) \circ d_\delta + \delta^* \circ \sin(\theta^*) \circ d_\theta =0,\\
    & d_{q^y} - \sin (\theta^*) \circ d_\delta - \delta^* \circ \cos(\theta^*) \circ d_\theta =0,
  \end{align}
  the relations for the inequality restrictions
  \begin{align}
    & d_{\delta,i} \geq 0, && \text{if } \delta_i=0,
  \end{align}
  the complementarity conditions
  \begin{align}
    &\nabla Q(\alpha^*)_i^\intercal d_\alpha = d_{\delta,i}, && \text{if } i \in \Ical(x^*), \label{eq: critical directions 8}\\
    &d_{r,i} = 0, && \text{if } i \in \Acal(x^*) \label{eq: critical directions 9}\\
    &0 \leq \left( \nabla Q(\alpha^*)_i^\intercal d_\alpha - d_{\delta,i} \right) \perp d_{r,i} \geq 0, && \text{if } i \in \Bcal(x^*), \label{eq: critical directions 10}
  \end{align}
  and
  \begin{equation}
    \nabla_u \mathcal J(u^*)^\intercal d_u \leq 0.
  \end{equation}

  For the first and second derivatives of the Lagrangian with respect to $u$, in direction $d_u$, we obtain
  \begin{align*}
    & \nabla_{u} \Lcal(z^*) [d_u] = \langle \nabla_u \Jcal (u^*) - \nabla_{u u}^2 \Dcal (u^*)^\intercal p - K_x^\intercal \varphi^x - K_y^\intercal \varphi^y, d_u \rangle,\\
    & \nabla_{u u}^2 \Lcal(z^*) [d_u]^2 = \langle \nabla_{u u}^2 \Jcal (u^*) d_u, d_u \rangle - \langle [\nabla_u (\nabla_{u u}^2 \Dcal (u^*)^\intercal p)]^\intercal d_u, d_u \rangle,
  \end{align*}
  where $z^*:=(x^*,p,\varphi, \lambda, \rho, \sigma, \gamma, \nu).$
  The first and second derivatives with respect to $\alpha$ are given by
  \begin{align*}
    & \nabla_{\alpha} \Lcal(z^*) [d_\alpha] = - \langle \nabla_\alpha Q(\alpha^*)^\intercal \nu, d_\alpha \rangle,\\
    & \nabla_{\alpha \alpha}^2 \Lcal(z^*) [d_\alpha]^2 = - \langle [\nabla_\alpha (\nabla_\alpha Q(\alpha^*))^\intercal \nu)]^\intercal d_\alpha, d_\alpha \rangle,
  \end{align*}
  The first derivatives of the Lagrangian with respect to $r$ and $\delta$ are given by
  \begin{align*}
    & \nabla_r \Lcal (z^*)[d_r] = \langle \varphi^x \circ \cos (\theta^*) + \varphi^y \circ \sin(\theta^*) - \gamma, d_r \rangle\\
    & \nabla_\delta \Lcal (z^*)[d_\delta] = - \langle \lambda^x \circ \cos (\theta^*) + \lambda^y \circ \sin(\theta^*) + \sigma - \nu, d_\delta \rangle,
  \end{align*}
  respectively. Deriving the latters with respect to $\theta$ we get
  \begin{align}
    & \nabla_{r \theta}^2 \Lcal(z^*) [d_r,d_\theta] = \langle (-\varphi^x \circ \sin (\theta^*)+ \varphi^y \circ \cos (\theta^*)) \circ d_\theta, d_r \rangle, \label{eq: mixed second derivative r}\\
    & \nabla_{\delta \theta}^2 \Lcal(z^*) [d_\delta,d_\theta] = \langle (\lambda^x \circ \sin (\theta^*)- \lambda^y \circ \cos (\theta^*)) \circ d_\theta, d_\delta \rangle. \label{eq: mixed second derivative delta}
  \end{align}
  In addition we get that $\nabla_{\theta r}^2 \Lcal(z^*) [d_\theta, d_r]= \nabla_{r \theta}^2 \Lcal(z^*) [d_r,d_\theta]$ and $\nabla_{\theta \delta}^2 \Lcal(z^*) [d_\theta,d_\delta] =\nabla_{\delta \theta}^2 \Lcal(z^*) [d_\delta,d_\theta]$.

  From \Cref{eq: mixed second derivative r} we obtain, thanks to equations \eqref{eq: critical directions 2}, \eqref{eq: critical directions 3} and \eqref{eq: critical directions 9}, that
  \begin{equation*}
    \nabla_{r \theta}^2 \Lcal(z^*) [d_r,d_\theta] =\langle r^* \circ (\varphi^x \circ \cos (\theta^*)+ \varphi^y \circ \sin (\theta^*)) \circ d_\theta, d_\theta \rangle_{\Ical^*} + \langle - \varphi^x \circ K_y d_u + \varphi^y \circ K_x d_u, d_\theta \rangle_{\Ical^* \cup \Bcal^*}.
  \end{equation*}
  Using additionally \Cref{eq:strong_stationarity_TV_5} and \Cref{eq:strong_stationarity_TV_11} we then get that
  \begin{equation} \label{eq: reformulated mixed second derivative r}
    \nabla_{r \theta}^2 \Lcal(z^*) [d_r,d_\theta] = \langle - \varphi^x \circ K_y d_u + \varphi^y \circ K_x d_u, d_\theta \rangle_{\Ical^* \cup \Bcal^*}.
  \end{equation}

  From \Cref{eq: mixed second derivative delta} we get, using the fact that $\lambda^x|_{\Acal^*} =0$, $\lambda^y|_{\Acal^*} =0$ and equations \eqref{eq:strong_stationarity_TV_7} and \eqref{eq: critical directions 8}, that
  \begin{equation}
    \nabla_{\delta \theta}^2 \Lcal(z^*) [d_\delta,d_\theta] = \langle (\lambda^x \circ \sin (\theta^*)- \lambda^y \circ \cos (\theta^*)) \circ \nabla Q(\alpha^*)_i^\intercal d_\alpha, d_\theta \rangle_{\Ical^*}.
  \end{equation}

  The first and second derivatives with respect to $\theta$ are given by
  \begin{align*}
    &  \nabla_{\theta} \Lcal(z^*) [d_\theta] =  \left\langle -\varphi^x \circ r \circ \sin (\theta^*) + \varphi^y \circ r \circ \cos(\theta^*) + \lambda^x \circ \delta \circ \sin(\theta^*) - \lambda^y \circ \delta \circ \cos (\theta^*), d_\theta \right\rangle,\\
    & \nabla_{\theta \theta}^2 \Lcal(z^*) [d_\theta]^2 = \left\langle (-\varphi^x \circ r \circ \cos (\theta^*) - \varphi^y \circ r \circ \sin(\theta^*) + \lambda^x \circ \delta \circ \cos (\theta^*) + \lambda^y \circ \delta \circ \sin (\theta^*)) \circ d_\theta, d_\theta \right\rangle,
  \end{align*}
  which, using \Cref{eq:strong_stationarity_TV_5} and \Cref{eq:strong_stationarity_TV_6}, implies that
  \begin{equation*}
    \nabla_{\theta \theta}^2 \Lcal(z^*) [d_\theta]^2 = \left\langle (- \gamma \circ r^* + (\nu- \sigma) \circ \delta^*) \circ d_\theta, d_\theta \right\rangle.
  \end{equation*}
  Since $\gamma_i=0$, for all $i \in \Ical(x^*)$, and $r^*_i=0$, for all $i \in \Acal(x^*) \cup \Bcal(x^*)$, the first term on the right hand side vanishes. Moreover, since $0 \leq \delta^* \perp \sigma \geq 0$, we obtain
  \begin{equation*}
    \nabla_{\theta \theta}^2 \Lcal(z^*) [d_\theta]^2 = \sum_{i \in \Ical^* \cup \Bcal^*} \nu_i \delta^*_i d_{\theta,i}^2 \geq 0.
  \end{equation*}
  All remaining second partial derivatives are equal to zero.

  Altogether we then obtain that
  \begin{align*}
    d^T \nabla_{xx}^2 & \Lcal (x^*, p,\varphi, \lambda, \rho, \sigma, \gamma, \nu) d =  \langle \nabla_{u u}^2 \Jcal (u^*) d_u, d_u \rangle - \langle [\nabla_u (\nabla_{u u}^2 \Dcal (u^*)^\intercal p)]^\intercal d_u, d_u \rangle\\
    & -  \langle [\nabla_\alpha (\nabla_\alpha Q(\alpha^*))^\intercal \nu)]^\intercal d_\alpha, d_\alpha \rangle
    + 2 \langle - \varphi^x \circ K_y d_u + \varphi^y \circ K_x d_u, d_\theta \rangle_{\Ical^* \cup \Bcal^*}\\
    &+ 2 \langle (\lambda^x \circ \sin (\theta^*)- \lambda^y \circ \cos (\theta^*)) \circ \nabla_\alpha Q(\alpha^*)_i^\intercal d_\alpha, d_\theta \rangle_{\Ical^*} + \langle \delta \circ \nu \circ d_\theta, d_\theta \rangle_{\Ical^* \cup \Bcal^*}.
  \end{align*}
  which completes the proof.
\end{proof}

\section{General Bilevel Problem}\label{sec:general}
The techniques developed so far for the bilevel total variation case may be extended to the general problem \eqref{eq:bilevel_mpec_formulation_general}, involving several data fidelity terms, as well as different first- and second-order sparsity based regularizers.
In the next result, existence of Lagrange multipliers for the general problem \eqref{eq:bilevel_mpec_formulation_general} is verified, and \textbf{M-stationarity conditions} are derived.

To formulate the result, let us introduce the following active, inactive and biactive sets for the Euclidean norm regularizers:
\begin{align*}
  \Acal_j^\alpha (x)&{}:=\{i=1,\dots, m_j: r_{j,i} =0, Q_j(\alpha_j)_i> \delta_{j,i} \}, \quad j=1, \dots, M,\\
  \Ical_j^\alpha (x)&{}:=\{i=1,\dots, m_j: r_{j,i} >0, Q_j(\alpha_j)_i= \delta_{j,i}\}, \quad j=1, \dots, M,\\
  \Bcal_j^\alpha (x)&{}:=\{i=1,\dots, m_j:r_{j,i} =0, Q_j(\alpha_j)_i= \delta_{j,i}\}, \quad j=1, \dots, M,
\end{align*}
and the active, inactive and biactive sets for the Frobenius norm regularizers:
\begin{align*}
  \Acal_j^\beta (x)&{}:=\{i=1,\dots, n_j:\rho_{j,i} =0, S_j(\beta_j)_i> \tau_{j,i} \}, \quad j=1, \dots, N,\\
  \Ical_j^\beta (x)&{}:=\{i=1,\dots, n_j:\rho_{j,i} >0, S_j(\beta_j)_i= \tau_{j,i}\}, \quad j=1, \dots, N,\\
  \Bcal_j^\beta (x)&{}:=\{i=1,\dots, n_j:\rho_{j,i} =0, S_j(\beta_j)_i= \tau_{j,i}\}, \quad j=1, \dots, N.
\end{align*}
We assume hereafter that all row vectors of the matrices $K_j^x, K_j^y$, $j=1, \dots, M$, and $E_j^x, E_j^y, E_j^z$, $j=1, \dots, N$, are non-zero.

\begin{theorem} \label{thm: strongg stationarity general problem}
  Let $x^*=(u^*,c^*,q^*,\Lambda^*, \lambda^*, \sigma^*, \alpha^*, \beta^*,r^*,\delta^*,\rho^*, \tau^*, \theta^*, \phi^*, \varphi^*)$ be a local optimal solution to \eqref{eq:bilevel_mpec_formulation_general}.
  Assume that the following hold:\vspace{0.5em}
  \begin{itemize} \setlength\itemsep{0.5em}
    \item[(H1)] For $j=1, \dots, M, ~\{i: r^*_{j,i}=\delta^*_{j,i} =0 \} = \emptyset$, 
    \item[(H2)] For $j=1, \dots, N, ~~\{i: \rho^*_{j,i}=\tau^*_{j,i} =0 \} = \emptyset$,  
    \item[(H3)] For $j=1, \dots, M,~[\nabla_{\alpha_j} Q_j(\alpha^*_j)_i]_{\{\delta^*_{j,i} = Q_j(\alpha_j^*)_i \}} \zeta =0 \implies  \zeta_i =0, ~\forall i \in \Bcal_j^\alpha (x^*)$,
    \item[(H4)] For $j=1, \dots, N,~[\nabla_{\beta_j} S_j(\beta^*_j)_i]_{\{\tau^*_{j,i} = S_j(\beta_j^*)_i \}} \tilde \zeta =0 \implies  \tilde \zeta_i =0, ~\forall i \in \Bcal_j^\beta (x^*)$, 
    \item[(H5)] For $j=1, \dots, K,~ \exists w \in \R^{|\lambda_j|}: \nabla_{\lambda_j} P_j(\lambda^*_j)^\intercal w \leq0, ~\nabla_{\lambda_j} P_j(\lambda^*_j)^\intercal w \neq 0,$ 
    \item[(H6)] For $j=1, \dots, L,~ \exists \tilde w \in \R^{|\sigma_j|}: \nabla_{\sigma_j} R(\sigma^*_j)^\intercal \tilde w \leq 0, \nabla_{\sigma_j} R(\sigma^*_j)^\intercal \tilde w \neq 0.$
    \item[(H7)] For any $x$ in a neighbourhood of $x^*$, $(\bar \psi, \bar \xi)= \mathbf 0$ is the unique solution to the system 
    \begin{subequations}
      \begin{align*}
        & \sum_{j=1}^M \Kbb_j^\star \psi_j + \sum_{j=1}^N \Ebb_j^\star \xi_j =0,\\
        & \chi_{\Ical^\alpha_j(x^*)} \psi_{j} =0, && j=1, \dots, M,\\
        & \chi_{\Acal^\alpha_j(x^*) \cup \Bcal^\alpha_j(x^*)} \left( \Cos(\theta_j) \psi_{j}^{x}+ \Sin(\theta_j) \psi_{j}^{y} \right) = 0,  && j=1, \dots,M,\\
        &\chi_{\Ical^\beta_j(x^*)} \xi_{j} =0, && j=1, \dots, N,\\
        & \chi_{\Acal^\beta_j(x^*) \cup \Bcal^\beta_j(x^*)} \left( \Sin(\phi_j) \left( \Cos(\varphi_j) \xi_j^x+ \Sin(\varphi_j) \xi_j^y \right) + \Cos(\phi_j) \xi_j^z \right) = 0,  && j=1, \dots,N.
      \end{align*}
    \end{subequations}    
  \end{itemize} \vspace{0.5em}
  Then there exist Lagrange multipliers
  $(p,\pi, \zeta,\mu_{\mathbb B}^+,\mu_{\mathbb B}^-, \mu_{\sigma}^+,,\mu_{\sigma}^-,\mu_{c},\mu_R,\mu_P,\mu_\tau,\mu_\delta,\nu^\alpha, \nu^\beta, \gamma^\alpha, \gamma^\beta)$
  such that, together with equations \eqref{eq:bilevel_mpec_formulation_general_first_constraint}-\eqref{eq:bilevel_mpec_formulation_general_final_constraint}, the adjoint equation:
  \begin{multline} \label{eq: adjoint general}
    \nabla \Jcal(u^*) - \nabla_{uu}^2 \Dcal_0(u^*)^\intercal p - \sum_{j=1}^K \sum_{i=1}^{k_j} P_j(\lambda^*_j)_i \nabla_{uu}^2 (\Dcal_j(u^*)_i)^\intercal p - \sum_{j=1}^M \Kbb_j^\star \pi_j - \sum_{j=1}^N \Ebb_j^\star \zeta_j\\
    + \sum_{j=1}^L \Bbold_j^\intercal \diag(-c^*_j - R_j(\sigma^*_j))  \mu_{\Bbold,j}^-  + \sum_{j=1}^L \Bbold_j^\intercal \diag(-c^*_j+R_j(\sigma^*_j))  \mu_{\Bbold,j}^+ - \sum_{j=1}^L \Bbold_j^\intercal \diag(c^*_j) \mu_{c,j} =0,
  \end{multline}
the relation between the adjoint state $p$ and the dual variables' multipliers:
  \begin{align}
      &- \Bbold_j p - (\Bbold_j u^*)\circ (\mu_{\Bbold,j}^- + \mu_{\Bbold,j}^+)- \mu_{\sigma,j}^- + \mu_{\sigma,j}^+ - (\Bbold_j u^*)\circ \mu_{c,j} =0, && \forall j=1, \dots,L,\\
      & \vartheta_j = \Kbb_j p , && \forall j=1, \dots, M,\\
      & \eta_j = \Ebb_j p , && \forall j=1, \dots,N,
  \end{align}
the gradient type equations:
  \begin{align}
      & -\nabla_\lambda P_j(\lambda^*_j) \nabla_u \Dcal_j (u^*)^\intercal p + \nabla_\lambda P_j(\lambda^*_j) \mu_{P,j} =0, && \forall j=1, \dots,K,\\
      & \nabla_\sigma R_j(\sigma^*_j) \diag(\Bbold_j u^*) (\mu_{\Bbold,j}^+ - \mu_{\Bbold,j}^-) - \nabla_\sigma R_j(\sigma^*_j) (\mu_{\sigma,j}^- + \mu_{\sigma,j}^+ + \mu_{R,j} )=0, && \forall j=1, \dots, L,\\
      & \nabla_\alpha Q_j(\alpha^*_j) \nu_j^{\alpha} =0, && \forall j=1, \dots, M,\\
      & \nabla_\beta S_j(\beta^*_j) \nu_j^{\beta} =0, && \forall j=1, \dots,N,
    \end{align}
    the relations between the auxiliar constraints' multipliers:
    \begin{align}
      & \Cos(\theta^*_j) \pi_{j}^{x}+ \Sin(\theta^*_j) \pi_{j}^{y} - \gamma_j^{\alpha}=0,  && \forall j=1, \dots,M,\\
      & \Cos(\theta^*_j) \vartheta_j^{x}+ \Sin(\theta^*_j) \vartheta_j^{y} + \mu_{\delta,j} - \nu_j^{\alpha}=0,  && \forall j=1, \dots,M,\\
      & \Sin(\phi^*_j) \left( \Cos(\varphi^*_j) \zeta_j^x+ \Sin(\varphi^*_j) \zeta_j^y \right) + \Cos(\phi^*_j) \zeta_j^z - \gamma_j^{\beta}=0,  && \forall j=1, \dots,N,\\
      & - \Sin(\phi^*_j) \left( \Cos(\varphi^*_j) \eta_{j}^x+ \Sin(\varphi^*_j) \eta_{j}^y \right) - \Cos(\phi^*_j) \eta_{j}^z - \mu_{\tau,j} +\nu_j^{\beta}=0,  && \forall j=1, \dots,N,\\
      & - r^*_j \circ \left( \Sin(\theta^*_j) \pi_j^x - \Cos(\theta^*_j) \pi_j^y \right)
       + \delta^*_j \circ \left( \Sin(\theta^*_j) \vartheta_j^x- \Cos(\theta^*_j) \vartheta_j^y \right) =0,  && \forall j=1, \dots,M,\\
      & \rho^*_j \circ \Cos(\phi^*_j) \left( \Cos(\varphi^*_j) \zeta_j^x + \Sin(\varphi^*_j) \zeta_j^y \right)- \rho^*_j \circ \Sin(\phi^*_j) \zeta_j^z\\
      & \hspace{0.1cm} - \tau^*_j \circ \Cos(\phi^*_j) \left( \Cos(\varphi^*_j) \eta_j^x  + \Sin(\varphi^*_j) \eta_j^y \right) + \tau^*_j \circ \Sin(\phi^*_j) \eta_j^z =0,  && \forall j=1, \dots,N, \nonumber\\
      & -\rho^*_j \circ \Sin(\phi^*_j) \left( \Sin(\varphi^*_j) \zeta_j^x - \Cos(\varphi^*_j) \zeta_j^y \right) \\
      & \hspace{0.5cm} + \tau^*_j \circ \Sin(\phi^*_j) \left( \Sin(\varphi^*_j) \eta_j^x - \Cos(\varphi^*_j) \eta_j^y \right)=0,  && \forall j=1, \dots,N, \nonumber
  \end{align}
  the complementarity conditions for the absolute value terms:
  \begin{align}
    & 0 \geq (-c^*_{j} - R_j(\sigma^*_j)) \circ (\Bbold_j u^*) \perp \mu_{\Bbold,j}^- \geq 0, && \forall j=1, \dots,L,\\
    & 0 \geq (-c^*_{j} + R_j(\sigma^*_j)) \circ (\Bbold_j u^*) \perp \mu_{\Bbold,j}^+ \geq 0, && \forall j=1, \dots,L,\\
    & 0 \geq (-c^*_{j} - R_j(\sigma^*_j)) \perp \mu_{\sigma,j}^- \geq 0, && \forall j=1, \dots,L,\\
    & 0 \geq (-c^*_{j} + R_j(\sigma^*_j)) \perp \mu_{\sigma,j}^+ \geq 0, && \forall j=1, \dots,L,\\
    & 0 \leq c^*_{j} (\Bbold_j u^*) \circ  \perp \mu_{c,j} \geq 0, && \forall j=1, \dots,L,\\
    & 0 \leq R_j(\sigma^*_j) \perp \mu_{R,j} \geq 0, && \forall j=1, \dots,L,
  \end{align}
  the complementarity relations for the positivity constraints:
  \begin{align}
    & 0 \leq \delta^*_j \perp \mu_{\delta,j} \geq 0, && \forall j=1, \dots,M,\\
    & 0 \leq \tau^*_j \perp \mu_{\tau,j} \geq 0, && \forall j=1, \dots,N,\\
    & 0 \leq P_j(\lambda^*_j) \perp \mu_{P,j} \geq 0, && \forall j=1, \dots,K,
  \end{align}
  and the M-stationarity conditions
  \begin{align}
    & \nu_{j,i}^\alpha =0, && \forall i \in \Acal_j^\alpha (x^*), \quad \forall j=1, \dots,M\\
    & \gamma_{j,i}^\alpha =0, && \forall i \in \Ical_j^\alpha (x^*), \quad \forall j=1, \dots,M\\
    & \gamma_{j,i}^\alpha \nu_{j,i}^\alpha =0 \lor \gamma_{j,i}^\alpha \geq 0, \nu_{j,i}^\alpha \geq 0, && \forall i \in \Bcal_j^\alpha (x^*), \quad \forall j=1, \dots,M \label{eq: M stationarity biactive general problem 1}\\
    & \nu_{j,i}^\beta =0, && \forall i \in \Acal_j^\beta (x^*), \quad \forall j=1, \dots,N\\
    & \gamma_{j,i}^\beta =0, && \forall i \in \Ical_j^\beta (x^*), \quad \forall j=1, \dots,N\\
    & \gamma_{j,i}^\beta \nu_{j,i}^\beta =0 \lor \gamma_{j,i}^\beta \geq 0, \nu_{j,i}^\beta \geq 0, && \forall i \in \Bcal_j^\beta (x^*), \quad \forall j=1, \dots,N. \label{eq: M stationarity biactive general problem 2}
  \end{align}
  hold.
\end{theorem}
Due to its length, the proof of this theorem is provided in the supplementary material accompanying the article. Let us also notice that the hypotheses of \Cref{thm: strongg stationarity general problem} are stronger than the total variation counterpart. This is only for presentation purposes, as the proof is already very long. However, the hypotheses may be relaxed mimicking the ones in \Cref{thm: strong stationarity MPCC}.

\section{Conclusions and Perspectives}\label{sec:conclusions}
In this article we propose a reformulation of a family of bilevel imaging learning problems as mathematical programs with complementarity constraints (MPCC). This reformulation is based on a lifting of the primal-dual system, arising as necessary and sufficient optimality condition for the lower-level problem, through the introduction of trigonometric auxiliar variables. Thanks to the interpretation of this class of problems as MPCC, we are able to apply important tools from MPCC theory and obtain first-order necessary conditions and second-order sufficient optimality conditions that characterize M- and S-stationary points.

Furthermore, the proposed reformulation opens the door to the use of efficient solution algorithms to compute optimal parameters for the type of problems considered. In an accompanying article, we perform an exhaustive numerical study for various relevant imaging applications, using specialized nonlinear programming software, which is carefully adjusted to take advantage of the bilevel structure.

Although the focus of these two articles is on imaging applications, the studied reformulation also makes it possible to deal with other types of problems where nonsmooth sparsity-based regularizers are used. This occurs, for instance, in different inverse problems in which the presence of non-smooth regularization terms allows obtaining solutions that are more in line with available a-priori information about them (see, e.g., \cite{chan2003identification}). The extension is, however, not direct and our intention is to work along this direction in the future.

\printbibliography
\end{document}